\tikzstyle{every node}=[circle, draw, inner sep=0pt, minimum width=4pt]
\newtheorem{theorem}{Theorem}
\newtheorem{claim}{Claim}
\newtheorem{lemma}[theorem]{Lemma}
\newtheorem{conjecture}[theorem]{Conjecture}
\begin{document}
\onehalfspace

\title{Uniquely restricted matchings in subcubic graphs}

\author{
Maximilian F\"{u}rst$^2$\and
Michael A. Henning$^{1,}$\thanks{Research
supported in part by the South African National Research Foundation and the
University of Johannesburg}
\and 
Dieter Rautenbach$^2$}

\date{}

\maketitle

\vspace{-1cm}

\begin{center}
${}^1$Department of Pure and Applied Mathematics\\
University of Johannesburg \\
Auckland Park, 2006 South Africa \\
{\small \tt Email:  mahenning@uj.ac.za} \\[3mm]

${}^2$Institute of Optimization and Operations Research \\
Ulm University, Ulm, Germany  \\
\small\tt Email:  \texttt{maximilian.fuerst, dieter.rautenbach}@uni-ulm.de
\end{center}

\begin{abstract}
A matching $M$ in a graph $G$ is uniquely restricted if no other matching in $G$ covers the same set of vertices.
We conjecture that every connected subcubic graph 
with $m$ edges and $b$ bridges
that is distinct from $K_{3,3}$  
has a uniquely restricted matching
of size at least $\frac{m+b}{6}$,
and we establish this bound with $b$ 
replaced by the number of bridges 
that lie on a path between two vertices of degree at most $2$. Moreover, we prove that every connected subcubic graph 
of order $n$ and girth at least $7$ 
has a uniquely restricted matching of size at least $\frac{n-1}{3}$,
which partially confirms a Conjecture of F\"{u}rst and Rautenbach 
(Some bounds on the uniquely restricted matching number, arXiv:1803.11032).
\end{abstract}
{\small \textbf{Keywords:} Matching; uniquely restricted matching; subcubic; bridge; girth\\
{\small \textbf{AMS subject classification:} 05C70}

\newpage

\section{Introduction}
We consider only simple, finite, and undirected graphs, and use standard terminology.
A matching $M$ in a graph $G$ is {\it uniquely restricted} 
\cite{golumbic2001uniquely}
if no other matching in $G$ covers the same set of vertices, 
and $M$ is {\it acyclic} \cite{goddard2005generalized} if the subgraph induced by the set of vertices of $G$ covered by $M$
is a forest. 
The maximum sizes of a matching, 
a uniquely restricted matching, and 
an acyclic matching
are denoted by $\nu(G)$, $\nu_{ur}(G)$, and $\nu_{ac}(G)$, respectively. While unrestricted matchings are tractable \cite{lovasz2009matching},
uniquely restricted matchings and acyclic matchings are both NP-hard in general \cite{golumbic2001uniquely, goddard2005generalized}, 
and uniquely restricted matchings are also NP-hard in bipartite subcubic graphs \cite{mishra}.
This motivates the search for tight lower bounds. 
Golumbic, Hirst, and Lewenstein \cite{golumbic2001uniquely} 
observed that a matching $M$ in a graph $G$
is uniquely restricted if and only if there is no $M$-alternating cycle in  $G$, which implies $\nu_{ur}(G) \geq \nu_{ac}(G)$.
Hence, the main result in \cite{fura} implies the following.
\begin{theorem}\label{theoremfr}
If $G$ is a connected cubic graph with $m$ edges
that is distinct from $K_{3,3}$,
then $\nu_{ur}(G)\geq \frac{m}{6}$.
\end{theorem}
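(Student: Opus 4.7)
The plan is to derive Theorem~\ref{theoremfr} as an immediate corollary of two facts already recorded in the introduction. The first is the structural characterization of Golumbic, Hirst, and Lewenstein \cite{golumbic2001uniquely}: a matching $M$ in $G$ is uniquely restricted precisely when $G$ contains no $M$-alternating cycle. If $M$ is acyclic, i.e.\ the subgraph of $G$ induced by the vertices covered by $M$ is a forest, then a fortiori no $M$-alternating cycle exists (such a cycle would be forced to lie in this induced subgraph). Hence every acyclic matching is uniquely restricted and $\nu_{ur}(G)\ge\nu_{ac}(G)$ for every graph $G$. The second ingredient is the main result of \cite{fura}, which asserts $\nu_{ac}(G)\ge\frac{m}{6}$ for every connected cubic graph $G$ with $m$ edges distinct from $K_{3,3}$.

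Chaining the two inequalities yields $\nu_{ur}(G)\ge\nu_{ac}(G)\ge\frac{m}{6}$, which is exactly the bound claimed by Theorem~\ref{theoremfr}. So the actual proof to be written consists of two sentences: invoke \cite{fura} and then invoke the acyclic$\,\Rightarrow\,$uniquely restricted implication. There is no genuine obstacle in this short derivation, since the substantive work has been absorbed into \cite{fura}; the role of Theorem~\ref{theoremfr} in the paper is to serve as the known benchmark that the later refinements (involving bridges, and graphs of girth at least $7$) aim to sharpen or generalize.

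If one instead wanted to prove the bound $\nu_{ur}(G)\ge\frac{m}{6}$ directly, bypassing \cite{fura}, the natural strategy would be structural induction on a connected cubic $G\neq K_{3,3}$: locate a short cycle or a small cut, delete a carefully chosen subgraph with few vertices and cubically-many edges, apply induction to obtain a uniquely restricted matching in the residual graph, and extend it across the deleted region while verifying that no new alternating cycle is created through the glue. The hard part in such a direct argument is precisely this last verification, as extending a uniquely restricted matching is globally constrained (any cycle somewhere in $G$ can spoil the property), and it is this global obstruction that forces $K_{3,3}$ to be excluded and that motivates working with the stronger acyclic property rather than the uniquely restricted one in the inductive step.
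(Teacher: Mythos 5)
Your proposal is correct and is exactly the paper's own derivation: the authors also obtain Theorem~\ref{theoremfr} by combining the Golumbic--Hirst--Lewenstein characterization (no $M$-alternating cycle), which gives $\nu_{ur}(G)\ge\nu_{ac}(G)$, with the main result of \cite{fura} bounding the acyclic matching number of connected cubic graphs other than $K_{3,3}$ by $\frac{m}{6}$. No further comment is needed.
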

Since bridges lie in no cycles,
and, in particular, in no $M$-alternating cycles,
we believe that this result can be improved as follows.
\begin{conjecture}\label{conjecture1}
If $G$ is a connected subcubic graph 
with $m$ edges and $b$ bridges
that is distinct from $K_{3,3}$, then 
$\nu_{ur}(G)\geq \frac{m+b}{6}$.
\end{conjecture}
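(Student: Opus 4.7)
The plan is to argue by strong induction on the number of edges $m$, with Theorem~\ref{theoremfr} furnishing the base case: if $G$ is cubic and distinct from $K_{3,3}$, then $b=0$ and Theorem~\ref{theoremfr} immediately yields $\nu_{ur}(G)\ge m/6=(m+b)/6$. For the inductive step I fix a connected subcubic $G\neq K_{3,3}$ containing a vertex of degree at most $2$, isolate a small local configuration $H\subseteq G$ whose removal produces a smaller connected subcubic graph $G'$ to which the inductive hypothesis applies, and then extend a uniquely restricted matching of $G'$ across $H$ to one of $G$. The accounting reduces to showing that at each step the matching grows by at least $(\Delta m+\Delta b)/6$, where $\Delta m$ and $\Delta b$ are the drops in the numbers of edges and bridges; here the special status of bridges is genuinely helpful, because a bridge never lies on an $M$-alternating cycle and so can be freely incorporated into $M$ without endangering unique restrictedness.

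The reductions I would try, in order of increasing difficulty, are the following. \emph{(i)} If $v$ has degree $1$, include the pendant bridge $uv$ in $M$, delete $\{u,v\}$, and track $\Delta m$ and $\Delta b$ as a function of $\deg(u)\in\{1,2,3\}$. \emph{(ii)} If $v$ has degree $2$ with both incident edges bridges, peel off a maximal ``bridge path'' through $v$; this case is favourable because every deleted edge counts twice in the budget $m+b$. \emph{(iii)} If a bridge $uv$ satisfies $\deg(u)=\deg(v)=3$, remove $uv$ to split $G$ into two subcubic components, each with a single degree-$2$ vertex, apply induction on each side, and decide whether adding $uv$ to the combined matching helps. \emph{(iv)} If $G$ is bridgeless but not cubic, we need a bridgeless analogue of Theorem~\ref{theoremfr}, obtained perhaps by a local ``cubicization'' that attaches a small pendant subgraph at each degree-$2$ vertex to recover cubicity before invoking Theorem~\ref{theoremfr}.

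The main obstacle is step (iv). Since Theorem~\ref{theoremfr} assumes cubicity, a bridgeless subcubic graph with many degree-$2$ vertices cannot be handled by direct quotation; this is exactly the setting in which the authors' own weakening to ``bridges lying on a path between two vertices of degree at most $2$'' falls short of the full conjecture. Cubicizing by gluing gadgets at the degree-$2$ vertices introduces new edges and bridges in the modified graph $\tilde G$, so after applying Theorem~\ref{theoremfr} to $\tilde G$ one must retract the matching to $G$ and verify that no $M$-alternating cycle survives; the gadget design must also avoid producing $\tilde G\cong K_{3,3}$. A secondary, bookkeeping-level obstacle is preserving the $K_{3,3}$ exception throughout the recursion: the reductions in (i)--(iii) can in principle produce a component isomorphic to $K_{3,3}$, and one must either handle such components by a direct ad hoc argument or engineer the reductions so that $K_{3,3}$ never arises.
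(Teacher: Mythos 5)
You are attempting a statement that the paper itself leaves open: Conjecture~\ref{conjecture1} is not proved anywhere in the paper, which establishes only the weakening Theorem~\ref{theorem1} in which $b$ counts the \emph{good} bridges. Your submission is therefore judged on its own, and as written it is a plan rather than a proof, with at least three concrete gaps. First, your case split (i)--(iv) is not exhaustive: a graph with minimum degree $2$ can have a bridge $uv$ with $\deg(u)=2$ and $\deg(v)=3$ whose companion edge at $u$ is not a bridge, and none of your reductions addresses it. Second, step (iii) loses $\tfrac13$ in the budget: splitting at a bridge $uv$ between two degree-$3$ vertices and applying induction to the two sides yields only $\nu_{ur}(G)\ge\frac{(m_1+b_1)+(m_2+b_2)}{6}=\frac{m+b-2}{6}$, and the edge $uv$ can be added to $M_1\cup M_2$ only when both $u$ and $v$ happen to be exposed, which the inductive hypothesis does not guarantee; you would need a strengthened hypothesis (a large uniquely restricted matching avoiding a prescribed vertex) that you neither state nor prove. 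Third, step (iv) is left open by your own admission; note, incidentally, that the bridgeless non-cubic case actually follows from the paper's Theorem~\ref{theorem1} (no bridges means no good bridges, so $\nu_{ur}(G)\ge m/6$), so the true residual difficulty of the conjecture lies not there but in graphs whose bridges are not good, i.e.\ bridges separating only degree-$3$ territory --- exactly the configuration your step (iii) fails to handle.

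For what it is worth, your overall architecture --- induction on $m$ (equivalently, a minimum counterexample), local reductions around a vertex of degree at most $2$, and accounting of $\Delta m+\Delta b$ against six times the gain in matching size --- is precisely how the paper proves Theorem~\ref{theorem1}: Claims~\ref{claim1}--\ref{claim8} and the final case analysis there all delete two to four vertices around a degree-$2$ vertex, occasionally inserting a shortcut edge $u'w$ and swapping it out of the returned matching. The places where that accounting would fail for arbitrary bridges are exactly where the restriction to good bridges is invoked, so your plan is well aligned with the known partial result but does not supply the new ideas needed to close the gap to the full conjecture.
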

The bound in Conjecture \ref{conjecture1} is achieved with equality
for every subcubic graph $G$ that arises from a subcubic tree $T$
with matching number $\frac{n(T)-1}{3}$,
by replacing some of the vertices of degree $1$ in $T$
with endblocks isomorphic to $K_{2,3}$,
see Figure \ref{fig1}.
Note that there are infinitely many subcubic trees with matching number $\frac{n(T)-1}{3}$ \cite{henning2016tight}.
In fact, if we perform $k$ such replacements,
then $G$ has size $m=n(T)-1+6k$ and $b=n(T)-1$ bridges.
Since a uniquely restricted matching can contain at most one edge
from each $K_{2,3}$ subgraph,
it follows easily that 
$\nu_{ur}(G)=\frac{n(T)-1}{3}+k=\frac{m+b}{6}$.

\begin{figure}[H]
\centering\tiny
\begin{tikzpicture}[scale = 0.9] 
	    \node (A) at (1.5,3) {};
	    \node (B) at (0.5,2) {};
	    \node (C) at (1.5,2) {};
	    \node (F) at (2.5,2) {};
	    \node (b1) at (0,2) {};
	    \node (b2) at (1,2) {};
	    \node (b3) at (0.25,1) {};
	    \node (b4) at (0.75,1) {};
	    \node (f1) at (2,2) {};
	    \node (f2) at (3,2) {};
	    \node (f3) at (2.25,1) {};
	    \node (f4) at (2.75,1) {};

	    \foreach \from/\to in {A/C,b3/B,b3/b2,b4/b1,b4/B,b4/b2,f3/F,f3/f2,f4/f1,f4/F,f4/f2, A/F}
	    \draw [-] (\from) -- (\to);
	    
	    \foreach \from/\to in {b1/b3,A/B,f1/f3}
	    \draw [-,very thick] (\from) -- (\to);
	    
\end{tikzpicture}
\caption{A graph where Conjecture \ref{conjecture1} is tight.}\label{fig1}
\end{figure}
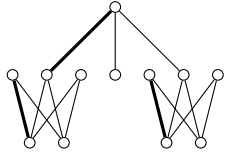

We prove the following weakening of Conjecture \ref{conjecture1}. 

A bridge in a graph is {\it good}
if it lies on a path between two vertices of degree at most $2$.

\begin{theorem}\label{theorem1}
If $G$ is a connected subcubic graph 
with $m$ edges and $b$ good bridges
that is distinct from $K_{3,3}$, then 
$\nu_{ur}(G)\geq \frac{m+b}{6}$.
\end{theorem}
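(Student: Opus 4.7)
The plan is to proceed by induction on $m$. If $G$ is cubic, then no vertex has degree at most $2$, so $b=0$, and Theorem \ref{theoremfr} immediately yields $\nu_{ur}(G)\geq m/6=(m+b)/6$; small non-cubic base cases such as short paths and cycles can be checked by hand. For the inductive step I assume that $G$ is a minimum counterexample: connected, subcubic, distinct from $K_{3,3}$, not cubic, and violating the claimed bound. Throughout I shall use the equivalent characterisation that a bridge $e$ of $G$ is good if and only if both components of $G-e$ contain a vertex of degree at most $2$ in $G$.

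The main reduction will treat a pendant vertex $v$ whose neighbour $u$ has degree $2$; let $w$ denote the other neighbour of $u$ and set $G'=G-\{u,v\}$. Since $uw$ is a bridge of $G$ whose removed side is exactly $\{u,v\}$, the graph $G'$ is connected and subcubic with $m(G')=m-2$, and moreover $G'\neq K_{3,3}$ because $w$ loses an edge and so has degree at most $2$ in $G'$. The crucial accounting will be $b(G')\geq b(G)-2$: the only bridges of $G$ absent from $G'$ are $uv$ and $uw$, and any other good bridge $f$ of $G$ remains good in $G'$, since the side of $G-f$ that contains $u,v,w$ either already contained a degree-at-most-$2$ vertex other than $u$ or $v$, or it does now because $w$'s degree drops in $G'$. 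By the inductive hypothesis, $G'$ has a uniquely restricted matching $M'$ of size at least $(m-2+b(G'))/6\geq (m+b-4)/6$. As $v$ is a pendant of $G$, the set $M'\cup\{uv\}$ is a uniquely restricted matching of $G$ of size at least $(m+b+2)/6$, contradicting the choice of $G$.

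Consequently, every pendant of the counterexample has a neighbour of degree $3$, and every degree-$2$ vertex has both its neighbours of degree $3$. Two further reductions will handle the remaining cases: a pendant $v$ with neighbour $u$ of degree $3$, and an internal degree-$2$ vertex $v$ with degree-$3$ neighbours $x,y$. In the first case, deleting $v$ reduces $m$ by $1$ but turns $u$ into a new low-degree vertex, which can create new good bridges and thereby compensate for the lost edge. In the second, one deletes $v$ together with a carefully chosen edge at $x$ or $y$. The analogous accounting, together with checks that the reduced graph is connected, subcubic, and not $K_{3,3}$, will again deliver a uniquely restricted matching of the required size.

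The main obstacle is the bookkeeping of good bridges under these reductions. A single local modification can simultaneously destroy some good bridges (when a degree-at-most-$2$ witness on one side is deleted) and create new ones (when another vertex's degree drops to $2$), and these contributions must net out in favour of the induction. I expect the hardest sub-case to be a pendant $v$ whose neighbour $u$ has degree $3$ and whose $2$-edge-connected block is very small, since trimming only $v$ may be insufficient and one may instead need to reduce the entire block, invoking the structure of that block rather than merely the single pendant; one must also verify in every reduction that $K_{3,3}$ is not accidentally created.
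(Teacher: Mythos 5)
Your overall strategy---minimal counterexample, local deletions, and careful bookkeeping of good bridges via the characterisation ``both sides of $G-e$ contain a vertex of degree at most $2$''---is exactly the paper's, and your one fully worked reduction (pendant $v$ with degree-$2$ neighbour $u$, delete $\{u,v\}$, add $uv$ to the matching) is correct. But it is a special case of a single clean step in the paper, and the two cases you leave as sketches are where the real content lies; as stated, both sketches fail. For a pendant $v$ whose neighbour $u$ has degree $3$, deleting only $v$ loses one edge and adds nothing to the matching, so you would need to \emph{gain} at least one good bridge; nothing forces this. Concretely, let $H$ be $K_4$ with one edge subdivided by a vertex $u$ (so $H$ is $2$-edge-connected with $u$ its unique degree-$2$ vertex) and attach a pendant $v$ to $u$. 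Then $m=8$, $b=0$ (the only bridge $uv$ is not good, since every vertex on the $u$-side has degree $3$ in $G$), while $G-v=H$ has $m'=7$ and $b'=0$, so your induction yields only $7/6<8/6$. The paper avoids this by always deleting \emph{both} the pendant and its neighbour and putting the pendant edge into the matching: then $m'\geq m-3$, $b'\geq b-3$, and the $+1$ from the matching edge exactly covers the deficit of $6/6$. Note also that new good bridges created by degree drops can never be credited, since the induction needs $b'\geq b-c$ for the bridges actually destroyed, not a gain.

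The second gap is larger. Once degree-$1$ vertices are eliminated, the whole difficulty is a degree-$2$ vertex $v$ with two degree-$3$ neighbours, and your proposal to ``delete $v$ together with a carefully chosen edge at $x$ or $y$'' does not close: that removes three edges while contributing no matching edge (you cannot add $vx$ to a matching of a graph from which $v$ has been deleted, and if you keep $v$ you cannot guarantee $x$ is uncovered), so the count is short by at least $3/6$. The paper needs a sequence of structural claims (no triangle with two degree-$2$ vertices, no adjacent degree-$2$ vertices, the neighbours $u,w$ of $v$ are non-adjacent with $v$ their only common neighbour, at most one and then no edge at $v$ is a good bridge) before the final case analysis, and the cases where $v$ or its neighbours meet good bridges require a genuinely different move: form $G'=G-\{u,v\}+\{u'w\}$ with a \emph{new} edge, and convert a matching $M'$ of $G'$ into one of $G$ by exchanging $u'w$ for the two edges $uu'$ and $vw$ when necessary. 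Nothing in your proposal anticipates this edge-replacement trick, and you yourself flag the hardest sub-case as unresolved; so the proposal is a reasonable plan in the right direction but not a proof.
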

Since every bridge in the graphs constructed above is good, 
Theorem \ref{theorem1} is also tight for these graphs.
F\"{u}rst and Rautenbach \cite{fura2} conjectured that 
$\nu_{ur}(G) \geq \frac{n-1}{3}$ 
for every connected subcubic graph $G$ of girth at least $5$.
We prove this conjecture for graphs of girth at least $7$.
\begin{theorem}\label{theorem2}
If $G$ is a connected subcubic graph of order $n$ and girth at least $7$, then $\nu_{ur}(G)\geq \frac{n-1}{3}$.
\end{theorem}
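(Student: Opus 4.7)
The plan is to prove Theorem~\ref{theorem2} by strong induction on $n$, with small cases verified directly. The inductive step is split according to $\delta(G)$.

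Suppose first that $\delta(G)=1$. Let $v$ be a leaf with neighbor $u$. I apply the induction hypothesis component-wise to $G'=G-\{u,v\}$, which inherits the subcubic and girth-$\geq 7$ properties, and obtain a uniquely restricted matching $M'$. Setting $M=\{uv\}\cup M'$, no $M$-alternating cycle $C$ of $G$ can contain $v$ (degree $1$) or $u$ (the two $C$-edges at $u$ would both lie outside $M$, since the only $M$-edge at $u$ is $uv\notin E(C)$, violating alternation). Hence $C\subseteq G'$ and $C$ would be $M'$-alternating, contradicting uniqueness. Since $G-u$ contains $\{v\}$ as a singleton component, $G'$ has at most two components, so $|M|\geq 1+(n-2-k)/3\geq (n-1)/3$ for $k\leq 2$.

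Suppose $\delta(G)=2$. This case is more delicate because an $M$-alternating cycle can genuinely pass through a degree-$2$ vertex, so the clean alternation argument above fails. Let $v$ be of degree $2$ with neighbors $u_1,u_2$, necessarily non-adjacent by girth. If $G$ itself is a cycle (of length $\geq 7$) the claim is immediate from $\nu_{ur}(C_n)\geq\lfloor(n-1)/2\rfloor\geq (n-1)/3$. Otherwise I consider the maximal path $P$ of degree-$2$ vertices containing $v$ and argue by cases on the length of $P$ and the degrees at its endpoints, selecting a matching on $P$ (together with chosen edges outside $P$ where necessary) that blocks any alternating cycle closing through $P$, and applying the induction hypothesis to the subcubic girth-$\geq 7$ graph obtained by deleting an appropriate vertex set. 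The girth condition is essential here, since it forbids short ``shortcut'' paths around $P$ that could reconstitute an alternating cycle.

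The main obstacle is the cubic case $\delta(G)=3$. Here $m=3n/2$, so Theorem~\ref{theoremfr} yields only $\nu_{ur}(G)\geq n/4$, strictly less than $(n-1)/3$ for $n\geq 5$; so a qualitatively new argument is needed. To close the gap I exploit girth $\geq 7$ essentially: the $2$-ball $B_2(v)$ around any vertex $v$ is a tree on exactly ten vertices, so $G$ is locally tree-like. I would select a vertex $v$ and a four-vertex set $S\subset B_2(v)$ spanning two independent edges, add those two edges to the matching, and apply the induction hypothesis to each component of $G-S$, obtaining $|M|\geq 2+(n-4-k)/3\geq(n-1)/3$ whenever $k\leq 3$. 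The technical heart of the proof is to verify that such an $S$ exists in every cubic girth-$\geq 7$ graph, that $G-S$ has at most three components and retains girth $\geq 7$, and that the augmented matching is uniquely restricted in $G$. Each of these steps hinges on girth $\geq 7$, which both forces $B_2(v)$ to be a tree and precludes short alternating cycles from reaching back into $S$.
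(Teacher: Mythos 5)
There is a genuine gap, and it sits exactly where you locate ``the technical heart'': the cubic case. Your plan is to delete a four-vertex set $S$ spanning two independent matching edges and apply the induction hypothesis to the components of $G-S$, needing $k\le 3$ components. But in a cubic graph a set $S$ of four vertices inducing only two edges sends $4\cdot 3-4=8$ edges into $G-S$, so $G-S$ can have as many as eight components (cubic graphs of girth $7$ can be highly bridged and tree-like at a global scale, so this is not excluded by any local argument), and each component costs you $\tfrac13$ under the hypothesis $\nu_{ur}\ge\frac{n_i-1}{3}$. Moreover, girth $\ge 7$ only forbids \emph{short} cycles; an $M$-alternating cycle through your two chosen edges can be arbitrarily long and travel far outside $B_2(v)$, so ``precludes short alternating cycles from reaching back into $S$'' does not verify unique restrictedness. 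The paper avoids all of this by proving a \emph{strengthened} intermediate statement (Lemma~\ref{lemma1}): every connected subcubic non-cubic non-tree graph of girth $\ge 7$ satisfies $\nu_{ur}(G)\ge\frac{n}{3}$, with no ``$-1$''. The cubic case then follows by deleting a single endvertex of a spanning tree (keeping $G-u$ connected, non-cubic, and not a tree) and reading off $\nu_{ur}(G)\ge\nu_{ur}(G-u)\ge\frac{n-1}{3}$. Without such a strengthening, a one-vertex deletion is unavailable to you, and the multi-vertex deletions you propose bleed $\tfrac13$ per component faster than the two gained matching edges can pay for.

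The same weakness undermines your $\delta(G)=2$ case, which is currently only a sketch. The paper's Lemma~\ref{lemma1} takes the maximal path $P:u_1v_1\cdots u_kv_ku_{k+1}$ you describe, but the decisive point is that under the stronger hypothesis only the \emph{tree} components of $G-V(P)$ cost $\tfrac13$ (non-tree, non-cubic components already satisfy the $\frac{n}{3}$ bound by minimality), and the minimum-degree-$2$ condition forces each tree component to absorb at least two of the at most $k+3$ edges leaving $P$, giving $c\le\frac{k+3}{2}$; the residual cases $k\le 3$ are then killed by the girth and the maximality of $P$. Under your weaker hypothesis every component costs $\tfrac13$, and the inequality $\nu_{ur}(G)\ge k+\frac{n-(2k+1)-c}{3}\ge\frac{n-1}{3}$ fails already for modest numbers of non-tree components. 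So the right fix is not more case analysis but a restructured induction: prove the $\frac{n}{3}$ bound for connected, non-cubic, non-tree subcubic graphs of girth $\ge 7$ first, and derive the theorem (including the cubic case) from it.
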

The next section contains the proofs of our two results.

\section{Proofs of Theorem \ref{theorem1} and Theorem \ref{theorem2}}

We immediately proceed to the proof of Theorem \ref{theorem1}.

\begin{proof}[Proof of Theorem \ref{theorem1}]
Suppose, for a contradiction, that $G$ is a counterexample 
of minimum size $m$.
Clearly, $G$ has order at least $2$.
Since no bridge in a cubic graph is good, 
Theorem \ref{theoremfr} implies that $G$ is not cubic.

\begin{claim}\label{claim1}
The minimum degree of $G$ is $2$.
\end{claim}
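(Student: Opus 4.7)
The plan is to argue by contradiction: assume $u \in V(G)$ has degree $1$ and let $v$ be its unique neighbor. Since $G$ is subcubic, $\deg_G(v) \in \{1,2,3\}$. If $\deg_G(v) = 1$, then $G = K_2$, so $m = b = 1$ and $\nu_{ur}(G) = 1 \geq \frac{m+b}{6}$, contradicting that $G$ is a counterexample. The cases $\deg_G(v) \in \{2,3\}$ will each be handled by deleting $\{u,v\}$ and invoking the minimality of $G$.

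For $\deg_G(v) = 2$, let $w$ be the other neighbor of $v$. I would set $G^{\star} = G - \{u, v\}$, which is a connected subcubic graph with $m - 2$ edges; since $\deg_{G^{\star}}(w) = \deg_G(w) - 1 \leq 2$, we have $G^{\star} \neq K_{3,3}$. Both $uv$ and $vw$ are bridges of $G$ (the pendant path $u, v$ is separated from the rest by $vw$), and $uv$ is always good, so at most two good bridges are lost. The bookkeeping step is to check that every good bridge of $G$ distinct from $uv$ and $vw$ remains a good bridge of $G^{\star}$, replacing the witness $u$ by $w$ (which now has degree at most $2$ in $G^{\star}$) whenever necessary; hence $b^{\star} \geq b - 2$. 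Minimality then yields a maximum uniquely restricted matching $M^{\star}$ of $G^{\star}$ with $|M^{\star}| \geq \frac{m - 2 + b^{\star}}{6} \geq \frac{m + b - 4}{6}$. Setting $M = M^{\star} \cup \{uv\}$, I would verify that $M$ is uniquely restricted in $G$: any $M$-alternating cycle must avoid $uv$ (a bridge), avoid $u$ (whose only edge is $uv$), and avoid $v$ (whose only $M$-edge is $uv$, so a cycle through $v$ would need a second $M$-edge at $v$ that is unavailable), hence restricts to an $M^{\star}$-alternating cycle in $G^{\star}$, contradicting the uniqueness of $M^{\star}$. Thus $\nu_{ur}(G) \geq |M^{\star}| + 1 \geq \frac{m + b + 2}{6} > \frac{m+b}{6}$, the desired contradiction.

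For $\deg_G(v) = 3$, let $v_1, v_2$ be the remaining neighbors, and again take $G^{\star} = G - \{u,v\}$; here $v$ is a cut vertex of $G$ (since $u$ is isolated in $G - v$). I would split into two subcases according to whether $v_1$ and $v_2$ lie in the same component of $G - v$. In the first subcase there is a cycle of $G$ through $v, v_1, v_2$, so $vv_1$ and $vv_2$ are not bridges of $G$; then $G^{\star}$ is connected and distinct from $K_{3,3}$ (since $v_i$ has degree at most $2$ in $G^{\star}$), only $uv$ is lost among good bridges, and the same extension as above gives $|M| \geq \frac{m + b + 2}{6}$. In the second subcase $G^{\star}$ has exactly two components $C_1 \ni v_1$ and $C_2 \ni v_2$, each a smaller connected subcubic graph distinct from $K_{3,3}$. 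The key identity here is $b = 1 + g_1 + g_2 + b_1 + b_2$, where $g_i \in \{0,1\}$ records whether $vv_i$ is a good bridge of $G$ and $b_i$ is the number of good bridges of $C_i$; this follows from checking that a bridge of $G$ lying inside $C_i$ is good in $G$ if and only if it is good in $C_i$, with $v_i$ (now of degree at most $2$ in $C_i$) taking over the role of $u$ as the low-degree witness on the $v$-side. Applying minimality to each $C_i$ and combining the resulting matchings with $uv$, the analogous alternating-cycle check yields a uniquely restricted matching $M$ of $G$ with $|M| \geq \frac{m + b + 2 - g_1 - g_2}{6} \geq \frac{m + b}{6}$, since $g_1 + g_2 \leq 2$, again contradicting that $G$ is a counterexample.

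The main obstacle throughout is the careful good-bridge accounting when the vertices near the pendant are removed: one must both verify that every good bridge of $G$ other than those explicitly incident to $u$ or $v$ is accounted for in the reduced graph(s), and ensure that the pair of degree-at-most-$2$ witnesses can be updated consistently across the reduction. I expect the second subcase of $\deg_G(v) = 3$ to be the subtlest, since both $vv_1$ and $vv_2$ are lost along with $uv$ and the goodness of each has to be precisely matched to a low-degree witness in the corresponding $C_i$.
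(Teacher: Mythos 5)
Your proof is correct and follows essentially the same route as the paper: delete $\{u,v\}$, verify that surviving good bridges stay good because the neighbors of $v$ drop to degree at most $2$, and extend a uniquely restricted matching of the reduced graph by the pendant edge $uv$. The paper avoids your case analysis on $\deg_G(v)$ by using the single uniform estimate $m'\geq m-3$, $b'\geq b-3$, which already suffices since $\frac{(m-3)+(b-3)}{6}+1=\frac{m+b}{6}$.
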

\begin{proof}[Proof of Claim \ref{claim1}:]
Suppose, for a contradiction, that $u$ is a vertex of degree $1$.
Let $v$ be the neighbor of $u$.
Let $G'=G-\{ u,v\}$
have $m'$ edges and $b'$ good bridges, see Figure \ref{fig1}.
Clearly, $m'\geq m-3$,
and $v$ is incident with at most $3$ good bridges.
Furthermore, 
since every vertex in $N_G(v)\setminus \{ u\}$ 
has degree less than $3$ in $G'$,
every good bridge of $G$ that belongs to $G'$
is also a good bridge of $G'$.
This implies $b'\geq b-3$.
Since adding $uv$ to a uniquely restricted matching in $G'$
yields a uniquely restricted matching in $G$,
the choice of $G$ implies the contradiction
$\nu_{ur}(G)\geq \nu_{ur}(G')+1\geq \frac{m'+b'}{6}+1\geq \frac{m+b}{6}$.
\end{proof}

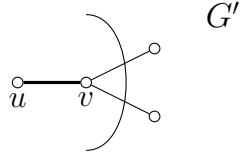
\begin{figure}[H] 
\centering\tiny
\begin{tikzpicture}[scale = 0.9] 
	    \node [label=below:\normalsize $u$](A) at (0,0.5) {};
	    \node [label=below:\normalsize $v$](B) at (1,0.5) {};
	    \node (C) at (2,0) {};
	    \node (D) at (2,1) {};

	    \foreach \from/\to in {B/C,B/D}
	    \draw [-] (\from) -- (\to);
	    
	    \draw [-,very thick] (A) -- (B);
	    \draw (1,-0.5) to[out=0,in=0] (1,1.5);
	    \pgftext[x=3cm,y=1.5cm] {\large $G^\prime$};
	   
\end{tikzpicture}
\caption{An illustration for Claim \ref{claim1}.} \label{fig1}
\end{figure}

\begin{claim}\label{claim2}
No triangle in $G$ contains two vertices of degree $2$.
\end{claim}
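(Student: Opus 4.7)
The plan is to suppose for contradiction that some triangle $uvw$ in $G$ has $\deg_G(u) = \deg_G(v) = 2$, and to derive a contradiction via the minimality of $G$ by deleting $u$ and $v$. Since $\deg_G(u) = \deg_G(v) = 2$ and $u,v,w$ form a triangle, $N_G(u) = \{v,w\}$ and $N_G(v) = \{u,w\}$. If additionally $\deg_G(w) = 2$, then $\{u,v,w\}$ is a connected component, so $G = K_3$ by connectivity; the direct check $\nu_{ur}(K_3) = 1 \geq \tfrac{3+0}{6}$ contradicts the counterexample assumption. Hence I may assume $\deg_G(w) = 3$ with a third neighbor $x \notin \{u,v\}$.

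Set $G' = G - \{u,v\}$. Since $u$ and $v$ have all their neighbors in $\{u,v,w\}$, the graph $G'$ is connected, subcubic, and distinct from $K_{3,3}$ (because $\deg_{G'}(w) = 1$), with $m' = m - 3$ edges. Let $b'$ denote the number of good bridges of $G'$. The edges $uv, uw, vw$ all lie in a triangle, so none is a bridge of $G$; consequently every bridge of $G$ is an edge of $G'$, and removing $u,v$ (which form only a short dead-end detour off $w$) preserves each such edge as a bridge of $G'$. The key technical step is to verify also that every \emph{good} bridge of $G$ remains good in $G'$, yielding $b' \geq b$. Minimality of $G$ then gives $\nu_{ur}(G') \geq (m - 3 + b)/6$.

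To finish, I would extend a maximum uniquely restricted matching $M'$ of $G'$ to $M = M' \cup \{uv\}$ in $G$. This is a matching since $u,v$ have no neighbors in $V(G')$, and it is uniquely restricted because any $M$-alternating cycle has even length, while every cycle of $G$ through $u$ or $v$ must equal the triangle $uvw$ (as $N_G(u), N_G(v) \subseteq \{u,v,w\}$), which is odd. Thus any $M$-alternating cycle of $G$ lies in $G'$ and is $M'$-alternating, contradicting the unique restriction of $M'$. Consequently $\nu_{ur}(G) \geq \nu_{ur}(G') + 1 \geq \tfrac{m-3+b}{6} + 1 > \tfrac{m+b}{6}$, contradicting the choice of $G$.

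The main obstacle will be the good-bridge bookkeeping. A good bridge $e$ of $G$ comes with a certificate path $P$ between two vertices of degree at most $2$ in $G$, and $P$ may visit $u$ or $v$. However, since all neighbors of $u,v$ lie in $\{u,v,w\}$, any excursion of $P$ into $\{u,v\}$ is anchored at $w$, and since $e \notin \{uv, uw, vw\}$ the bridge $e$ lies on the portion of $P$ beyond $w$; truncating $P$ at $w$ (whose degree in $G'$ is $1 \leq 2$) yields a valid certificate for $e$ in $G'$. A minor subcase to check is that the endpoint on the truncated side is not $v$, which follows because $v$'s only neighbors are $u$ and $w$ and so cannot be the far endpoint of any certificate containing $e$.
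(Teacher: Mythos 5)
Your proposal is correct and follows essentially the same route as the paper: delete $u$ and $v$, observe that no triangle edge is a bridge so $b'\geq b$ and $m'\geq m-3$, and extend a uniquely restricted matching of $G'$ by $uv$ to get the contradiction $\nu_{ur}(G)\geq \frac{m'+b'}{6}+1>\frac{m+b}{6}$. Your extra details (the $K_3$ subcase, the explicit alternating-cycle argument, and the certificate-path truncation at $w$) are correct elaborations of steps the paper leaves implicit.
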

\begin{proof}[Proof of Claim \ref{claim2}:]
Suppose, for a contradiction, that $uvw$ is a triangle in $G$ 
such that $u$ and $v$ have degree $2$.
Let $G'=G-\{ u,v\}$
have $m'$ edges and $b'$ good bridges, see Figure \ref{fig2}.
Clearly, $m'\geq m-3$,
and neither $u$ nor $v$ is incident with a bridge.
Again, 
every good bridge of $G$ that belongs to $G'$
is also a good bridge of $G'$,
which implies $b'\geq b$.
Since adding $uv$ to a uniquely restricted matching in $G'$
yields a uniquely restricted matching in $G$,
the choice of $G$ implies the contradiction
$\nu_{ur}(G)\geq \nu_{ur}(G')+1\geq \frac{m'+b'}{6}+1>\frac{m+b}{6}$.
\end{proof}

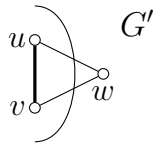
\begin{figure}[H] 
\centering\tiny
\begin{tikzpicture}[scale = 0.9] 
	    \node [label=left:\normalsize $v$](A) at (0,0) {};
	    \node [label=left:\normalsize $u$](B) at (0,1) {};
	    \node [label=below: \normalsize $w$](C) at (1,0.5) {};

	    \foreach \from/\to in {A/C,B/C}
	    \draw [-] (\from) -- (\to);
	    
	    \draw [-,very thick] (A) -- (B);
	    \draw (0,-0.5) to[out=0,in=0] (0,1.5);
	    \pgftext[x=1.5cm,y=1.25cm] {\large $G^\prime$};
\end{tikzpicture}
\caption{An illustration for Claim \ref{claim2}.} \label{fig2}
\end{figure}

\begin{claim}\label{claim3}
No two vertices of degree $2$ are adjacent in $G$.
\end{claim}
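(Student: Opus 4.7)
Assume for contradiction that $u$ and $v$ are adjacent vertices of degree~$2$ with respective other neighbors $u'$ and $v'$; Claim~2 forces $u' \neq v'$, since otherwise $u, v, u'$ would form a triangle with two degree-$2$ vertices. The plan mirrors the proofs of Claims~1 and~2: pass to a reduced graph $G'$, apply Theorem~1 inductively to the components of $G'$, and extend the resulting uniquely restricted matching by one edge at $\{u, v\}$. The target is
\[
\nu_{ur}(G) \geq \nu_{ur}(G')+1 \geq \frac{m'+b'}{6}+1 \geq \frac{m+b}{6},
\]
contradicting the choice of $G$.

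Set $G' = G - \{u, v\}$, so $m' = m - 3$. No component of $G'$ is $K_{3,3}$: each vertex of such a component would already have degree $3$ in $G$, leaving no edge to $\{u, v\}$, which would disconnect $G$ from the non-empty set $\{u,v\}$. For the good-bridge count I would show $b' \geq b - 3$ by arguing that every good bridge of $G$ not in $\{uu', uv, vv'\}$ remains a good bridge of $G'$: if $P$ is a witnessing path in $G$ for such a bridge $e$, then removing the edges $uu', uv, vv'$ from $P$ splits $P$ into sub-paths of $G'$ whose endpoints lie in $\{$the original endpoints of $P$, $u', v'\}$, all of which have degree at most $2$ in $G'$ (the original endpoints by definition of good bridge, and $u', v'$ because they lost a neighbor in passing from $G$ to $G'$), and $e$ lies on one of these sub-paths.

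The main task is to show $\nu_{ur}(G) \geq \nu_{ur}(G') + 1$. Let $M'$ be a maximum uniquely restricted matching of $G'$, and set $M = M' \cup \{uv\}$, a matching in $G$ of size $|M'| + 1$. To verify $M$ is uniquely restricted, consider an $M$-alternating cycle $C$ in $G$. If $C$ avoids both $u$ and $v$, it is an $M'$-alternating cycle in $G'$, contradicting the choice of $M'$. Otherwise, since $u$ and $v$ have degree~$2$, the cycle $C$ must contain the sub-path $u'$--$u$--$v$--$v'$, with $uv \in M$ and $uu', vv' \notin M$; the remainder of $C$ is a $v'$--$u'$ path $Q$ in $G'$ whose first and last edges belong to $M'$. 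When $u'v' \in E(G')$ and $u'v' \notin M'$, the cycle $Q + u'v'$ is then $M'$-alternating in $G'$, again contradicting the choice of $M'$. This covers every case except two: when $u'v' \in E(G)$ and $u'v' \in M'$, in which case the $4$-cycle $u u' v' v$ itself is an $M$-alternating cycle in $G$; and when $u'v' \notin E(G)$, so $Q$ cannot be closed into a cycle in $G'$ at all.

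To resolve both problematic sub-cases I would refine the reduction by temporarily adding the edge $u'v'$ when it is missing from $G$ (working with $\tilde G' = G' + u'v'$, so $m(\tilde G') = m-2$), and, when $u'v' \in M'$, replacing the extension by $(M' \setminus \{u'v'\}) \cup \{uu', vv'\}$. In this setting every $M$-alternating cycle in $G$ again yields a $Q + u'v'$ alternating cycle in $\tilde G'$, giving uniqueness of the restriction, and the edge $u'v'$ contributes a new good bridge to $\tilde G'$ exactly when $uv$ is a bridge of $G$; this gain offsets the loss of the simultaneously-bridging edges $\{uu', uv, vv'\}$ in the bridge accounting. The only remaining degeneracy is $u'v' \in E(G)$ with both $u', v'$ of degree $2$ in $G$, which forces $G = C_4$; since $\nu_{ur}(C_4) = 1 \geq \tfrac{4+0}{6}$, the graph $C_4$ is not a counterexample. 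The hardest step I anticipate is the exchange argument guaranteeing, when $u'v' \in E(G)$, the existence of a maximum uniquely restricted matching of $G'$ avoiding $u'v'$---one expects to swap $u'v'$ for an edge at an outside neighbor $u''$ of $u'$ or $v''$ of $v'$, whose existence is forced by the fact that otherwise $G = C_4$.
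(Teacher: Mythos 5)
Your reduction runs into a genuine problem at the bridge accounting for $\tilde G' = G-\{u,v\}+u'v'$. Your own argument that $b'\geq b-3$ for $G'=G-\{u,v\}$ relies crucially on $u'$ and $v'$ having degree at most $2$ in $G'$, so that truncated witnessing paths may legitimately end there; adding the edge $u'v'$ restores both vertices to degree $3$ (whenever they have degree $3$ in $G$) and can thereby destroy the goodness of arbitrarily many bridges at once, not just of the three edges incident with $u$ and $v$. Concretely, let $G$ consist of $2$-edge-connected blocks $B_0,\dots,B_k$ joined in a chain by bridges $e_1,\dots,e_k$, where $B_0$ contains the unique vertex $w$ of degree $2$ outside $\{u,v\}$, all other vertices have degree $3$, and $u,v$ hang between $B_k$ and a further block $B'$ via $u'\in B_k$ and $v'\in B'$ with $u'v'\notin E(G)$. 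Every $e_i$ is a good bridge of $G$ (one side of $e_i$ contains $w$, the other contains $u$ and $v$), so $b\geq k$; but in $\tilde G'$ the side of $e_i$ away from $w$ contains no vertex of degree at most $2$, so none of $e_1,\dots,e_k$ is good in $\tilde G'$, and $b(\tilde G')\geq b-4$ fails for $k\geq 3$. The claimed ``offset'' from $u'v'$ being a new good bridge cannot repair an unbounded loss. Two further holes: $\tilde G'$ may acquire a $K_{3,3}$ component (take $G=K_{3,3}-u'v'$ with the path $u'uvv'$ attached), and in the branch $u'v'\in E(G)$ the exchange lemma producing a maximum uniquely restricted matching of $G'$ that avoids $u'v'$ is exactly the part you leave unproved, so that branch is not closed either.

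The paper sidesteps all of this by splitting on whether $uv$ is a bridge rather than on whether $u'v'$ is an edge. Since $u$ and $v$ both have degree $2$, $uv$ is a bridge if and only if it is a good bridge. If it is, no cycle --- in particular no alternating cycle --- passes through $uv$, so $M'\cup\{uv\}$ is automatically uniquely restricted for $G'=G-\{u,v\}$, and the arithmetic tolerates $m'\geq m-3$, $b'\geq b-3$. If it is not, the paper deletes the third vertex $u'$ as well: any $M$-alternating cycle through $uv$ would have to pass through $u'$, which is uncovered by $M=M'\cup\{uv\}$, so none exists; and the fact that $uv$ lies on a cycle forces $uu'$, $vv'$, and at least one of the two remaining edges at $u'$ to be non-bridges, giving $m'\geq m-5$ and $b'\geq b-1$. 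Both cases close without adding edges, without exchange arguments, and without the degree-restoration problem above; this case split is the idea your proposal is missing.
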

\begin{proof}[Proof of Claim \ref{claim3}:]
Suppose, for a contradiction, that $uv$ is an edge in $G$ 
such that $u$ and $v$ both have degree $2$.
Let $u'$ be the neighbor of $u$ distinct from $v$,
and let $N_G(u')=\{ u,w,w'\}$.

First, we assume that $uv$ is not a good bridge.
Let $G'=G-\{ u,v,u'\}$
have $m'$ edges and $b'$ good bridges, see the left of Figure \ref{fig3}.
Clearly, $m'\geq m-5$.
Since $u$ and $v$ have degree $2$, 
the edge incident with $v$ distinct from $uv$
as well as the edge $uu'$
are not good bridges.
If $u'w$ and $u'w'$ are both good bridges,
then, necessarily, also $uv$ would be a bridge,
and, in view of the degrees of $u$ and $v$,
the edge $uv$ would be a good bridge, which is a contradiction.
Therefore, $u'$ is incident with at most one good bridge.
As before, every good bridge of $G$ that belongs to $G'$
is also a good bridge of $G'$,
which implies $b'\geq b-1$.
Since adding $uv$ to a uniquely restricted matching in $G'$
yields a uniquely restricted matching in $G$,
the choice of $G$ implies the contradiction
$\nu_{ur}(G)\geq \nu_{ur}(G')+1\geq \frac{m'+b'}{6}+1\geq \frac{m+b}{6}$.

Hence, we may assume that $uv$ is a good bridge.
Let $G'=G-\{ u,v\}$
have $m'$ edges and $b'$ good bridges, see the right of Figure \ref{fig3}.
Clearly, $m'\geq m-3$.
As before, every good bridge of $G$ that belongs to $G'$
is also a good bridge of $G'$,
which implies $b'\geq b-3$.
Since adding $uv$ to a uniquely restricted matching in $G'$
yields a uniquely restricted matching in $G$,
the choice of $G$ implies the contradiction
$\nu_{ur}(G)\geq \nu_{ur}(G')+1\geq \frac{m'+b'}{6}+1\geq \frac{m+b}{6}$.
\end{proof}

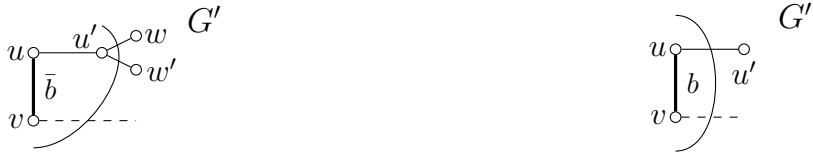
\begin{figure}[H]
\begin{minipage}{0.48\textwidth}
\centering\tiny
\begin{tikzpicture}[scale = 0.9] 
	    \node[label=left:\normalsize $u$] (A) at (0,1) {};
	    \node[label=left:\normalsize $v$] (B) at (0,0) {};
	    \node[label=above left:\normalsize $u^\prime$] (C) at (1,1) {};	    
	    \node[label=right:\normalsize $w$] (D) at (1.5,1.25) {};
	    \node[label=right:\normalsize $w^\prime$] (E) at (1.5,0.75) {};
	    \foreach \from/\to in {A/B,A/C,C/D,C/E}
	    \draw [-] (\from) -- (\to);
	    
	    \draw [-,very thick] (A) -- (B);
	    \draw [-,dashed] (B) -- (1.5,0);
	    \draw (0,-0.4) to[out=0,in=-30] (1,1.4);
	    \pgftext[x=2.5cm,y=1.5cm] {\large $G^\prime$};
	    \pgftext[x=0.25cm, y=0.5cm] {\normalsize $\bar{b}$};
\end{tikzpicture}
\end{minipage}
\begin{minipage}{0.48\textwidth}
\centering\tiny
\begin{tikzpicture}[scale=0.9]
	    \node[label=left:\normalsize $u$] (a) at (0,1) {};
	    \node[label=left:\normalsize $v$] (b) at (0,0) {};
	    \node[label=below:\normalsize $u^\prime$] (c) at (1,1) {};	    
	    
	    \foreach \from/\to in {a/b,a/c}
	    \draw [-] (\from) -- (\to);
	    
	    \draw [-,very thick] (a) -- (b);
	    \draw [-,dashed] (b) -- (1,0);
	    \draw (0,-0.5) to[out=0,in=0] (0,1.5);
	    \pgftext[x=1.75cm,y=1.5cm] {\large $G^\prime$};
	    \pgftext[x=0.25cm, y=0.5cm] {\normalsize $b$};
\end{tikzpicture}
\end{minipage}
\caption{An illustration for Claim \ref{claim3}. 
The label ``$b$'' indicates a good bridge,
while the label ``$\bar{b}$'' indicates an edge that is not a good bridge.} \label{fig3}
\end{figure}

Let $v$ be a vertex of degree $2$.
Let $u$ and $w$ be the neighbors of $v$.

\begin{claim} \label{claim4}
 $u$ and $w$ are not adjacent.
\end{claim}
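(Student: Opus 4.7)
The plan is to argue by contradiction: suppose $u$ and $w$ are adjacent, so that $uvw$ is a triangle in $G$. By Claim~\ref{claim2}, both $u$ and $w$ have degree $3$; write $u'$ and $w'$ for their respective third neighbors. The natural reduction is $G':=G-\{u,v,w\}$, which has $m-5$ edges (the three triangle edges together with $uu'$ and $ww'$). Among these, the three triangle edges lie on the $3$-cycle $uvw$ and cannot be bridges, so at most two good bridges can be lost, namely $uu'$ and $ww'$. An extension $M'\cup\{uv\}$ of a uniquely restricted matching $M'$ of $G'$ yields a uniquely restricted matching of $G$ one larger by the same argument as in Claims~\ref{claim1}--\ref{claim3}, since the only neighbors of $v$ are $u$ and $w$.

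I would split into three cases. First, if $u'=w'$, then both $uu'$ and $ww'$ lie on the $3$-cycle through $u$, $u'$, $w$, so neither is a bridge and every good bridge of $G$ that belongs to $G'$ is still a good bridge of $G'$ (using that $u'=w'$ has degree at most $1$ in $G'$ as a replacement witness for the removed degree-$2$ vertex $v$). This gives $b(G')\geq b$, and the extension yields $\nu_{ur}(G)\geq\frac{m-5+b}{6}+1>\frac{m+b}{6}$, contradicting the choice of $G$. Second, if $u'\neq w'$ and at most one of $uu'$, $ww'$ is a good bridge of $G$, then $b(G')\geq b-1$ (using $u'$ or $w'$, which has degree at most $2$ in $G'$, as a substitute witness for $v$), and the same extension gives $\nu_{ur}(G)\geq\frac{m+b-6}{6}+1=\frac{m+b}{6}$, again a contradiction.

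The main obstacle is the remaining case: $u'\neq w'$ with both $uu'$ and $ww'$ good bridges of $G$. In this case $G'$ decomposes into two connected components $C_{u'}$ and $C_{w'}$, each a proper subgraph of $G$ and neither isomorphic to $K_{3,3}$ since $u'$ has degree at most $2$ in $C_{u'}$ and $w'$ has degree at most $2$ in $C_{w'}$. Applying the inductive hypothesis to each component gives $\nu_{ur}(G')\geq\frac{m+b-7}{6}$, so the bare $+1$ augmentation falls one sixth short. To close this gap, I would augment with a matching of size $2$ rather than $1$, choosing two edges inside $\{u,v,w,u',w'\}$ according to the pattern $\{uv,ww'\}$, $\{vw,uu'\}$, or $\{uu',ww'\}$, depending on which of $u'$ and $w'$ can be left uncovered by suitably chosen uniquely restricted matchings of $C_{u'}$ and $C_{w'}$; in the critical subcase where every max uniquely restricted matching of $C_{u'}$ covers $u'$ and every such matching of $C_{w'}$ covers $w'$, one drops a single component-matching edge to free $u'$ (or $w'$), trading one edge for a size-$2$ augmentation.

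The most delicate step is verifying that the resulting combined matching is uniquely restricted in $G$. Any alternative matching $M^{\dagger}$ of $G$ covering the same vertex set must, because $v$'s only neighbors are $u$ and $w$, contain one of the three two-edge patterns within $\{u,v,w,u',w'\}$; switching from one pattern to another would pull in an edge of a component matching incident to $u'$ or $w'$, and ruling these alternatives out requires a subcase analysis using the unique restriction of the component matchings on $C_{u'}$ and $C_{w'}$. I expect that verifying the unique restriction in the critical subcase described above, where the arithmetic is tight, will be the hardest part of the proof.
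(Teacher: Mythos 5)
Your first two cases are fine, but the third case --- $u'\neq w'$ with both $uu'$ and $ww'$ good bridges --- contains a genuine arithmetic gap. Deleting all of $\{u,v,w\}$ there costs $5$ edges and $2$ good bridges, so applying the minimality of $G$ to the two components $C_{u'}$ and $C_{w'}$ gives only $\nu_{ur}(G')\geq\frac{m+b-7}{6}$, and you correctly note that you must therefore gain \emph{two} matching edges, i.e.\ $\nu_{ur}(G)\geq\nu_{ur}(G')+2$. But in your own critical subcase (every maximum uniquely restricted matching of $C_{u'}$ covers $u'$, and likewise for $C_{w'}$), the proposed trade --- drop one component edge to free $u'$, then add two edges such as $uu'$ and $vw$ --- nets only $+1$: the resulting matching has size $\nu_{ur}(G')+1\geq\frac{m+b-1}{6}$, which is strictly smaller than $\frac{m+b}{6}$, so no contradiction is reached; and since $\frac{m+b}{6}$ need not be an integer, you cannot round up. This subcase is not vacuous, so the argument does not close. (The delicate unique-restriction verification you worry about at the end is actually the easy part here: $uu'$ and $ww'$ are bridges and $v$ is uncovered or has both neighbors matched inside $\{u,v,w,u',w'\}$, so no alternating cycle can leave a component.)

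The paper sidesteps this entirely by letting the location of the good bridges dictate the deletion. Since two of the three edges at each of $u$ and $w$ lie on the triangle, each of $u,w$ carries at most one good bridge. If neither carries one, delete $\{u,v,w\}$: this loses $5$ edges and no good bridges, giving slack $1-\frac{5}{6}>0$. Otherwise, by symmetry $w$ carries exactly one, and one deletes only $\{u,v\}$: this loses $4$ edges and at most the single good bridge $uu'$, again giving slack $1-\frac{4+1}{6}>0$, because the good bridge at $w$ survives into $G'$ and keeps paying for itself. If you want to keep your structure, the same repair works locally: in your problematic third case replace the deletion by $G-\{u,v\}$, note that the triangle edges are not bridges so $m'\geq m-4$ and $b'\geq b-1$, and the single augmentation by $uv$ already yields $\nu_{ur}(G)\geq\frac{m+b+1}{6}$.
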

\begin{proof}[Proof of Claim \ref{claim4}:]
Suppose, for a contradiction, that $u$ and $w$ are adjacent.
Clearly, both $u$ and $w$ are incident with at most one good bridge
and $v$ is incident with no  good bridge.

First, we assume that $w$ is incident with exactly one good bridge.
Let $G'=G-\{ u,v\}$
have $m'$ edges and $b'$ good bridges, see the left of Figure \ref{fig4}.
Clearly, $m'\geq m-4$.
As before, every good bridge of $G$ that belongs to $G'$
is also a good bridge of $G'$,
which implies $b'\geq b-1$.
Since adding $uv$ to a uniquely restricted matching in $G'$
yields a uniquely restricted matching in $G$,
the choice of $G$ implies the contradiction
$\nu_{ur}(G)\geq \nu_{ur}(G')+1\geq \frac{m'+b'}{6}+1> \frac{m+b}{6}$.

Hence, by symmetry between $u$ and $w$, we may assume that neither $u$ nor $w$ is incident with a good bridge.
Let $G'=G-\{ u,v,w\}$
have $m'$ edges and $b'$ good bridges, see the right of Figure \ref{fig4}.
Clearly, $m'\geq m-5$.
As before, every good bridge of $G$ that belongs to $G'$
is also a good bridge of $G'$,
which implies $b'\geq b$.
Since adding $uv$ to a uniquely restricted matching in $G'$
yields a uniquely restricted matching in $G$,
the choice of $G$ implies the contradiction
$\nu_{ur}(G)\geq \nu_{ur}(G')+1\geq \frac{m'+b'}{6}+1> \frac{m+b}{6}$.
\end{proof}

\begin{figure}[H]
\begin{minipage}{0.48\textwidth}
\centering\tiny
\begin{tikzpicture}[scale = 0.9] 
	    \node[label=left:\normalsize $v$] (v) at (0,0.5) {};
	    \node[label=above:\normalsize $u$] (u) at (1,1) {};
	    \node[label=below:\normalsize $w$] (w) at (1,0) {};	    
	    \foreach \from/\to in {u/w,v/w}
	    \draw [-] (\from) -- (\to);
	    
	    \draw [-,very thick] (u) -- (v);
	    \draw [-,dashed] (u) -- (2,1);
	    \draw [-,dashed] (w) -- (2,0);
	    \draw (-0.5,0) to[out=10,in=-90] (1.5,1.5);
	    \pgftext[x=2.5cm,y=1.5cm] {\large $G^\prime$};
	    \pgftext[x=1.5cm, y=0.25cm] {\normalsize $b$};
\end{tikzpicture}
\end{minipage}
\begin{minipage}{0.48\textwidth}
\centering\tiny
\begin{tikzpicture}[scale=0.9]
	    \node[label=left:\normalsize $v$] (v) at (0,0.5) {};
	    \node[label=above left:\normalsize $u$] (u) at (1,1) {};
	    \node[label=below left:\normalsize $w$] (w) at (1,0) {};	    
	    \foreach \from/\to in {u/w,v/w}
	    \draw [-] (\from) -- (\to);
	    
	    \draw [-,very thick] (u) -- (v);
	    \draw [-,dashed] (u) -- (2,1);
	    \draw [-,dashed] (w) -- (2,0);
	    \draw (0.7,-0.5) to[out=0,in=0] (0.7,1.5);
	    \pgftext[x=2.5cm,y=1.5cm] {\large $G^\prime$};
	    \pgftext[x=1.5cm, y=0.25cm] {\normalsize $\bar{b}$};
\end{tikzpicture}
\end{minipage}
\caption{An illustration for Claim \ref{claim4}.} \label{fig4}
\end{figure}
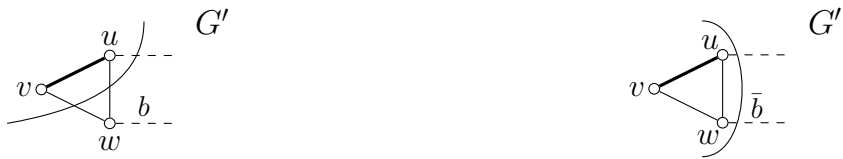

\begin{claim}\label{claim5}
$u$ and $w$ have at most two common neighbors.
\end{claim}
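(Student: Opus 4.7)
The plan is to argue by contradiction. Suppose $u$ and $w$ have three common neighbors. Since $v \in N(u) \cap N(w)$ already and $G$ is subcubic, this forces $\deg(u) = \deg(w) = 3$ and $N(u) = N(w) = \{v, x, y\}$ for some two further vertices $x, y$, so $G$ contains the $K_{2,3}$ with parts $\{u, w\}$ and $\{v, x, y\}$. If $V(G) = \{u, v, w, x, y\}$ then $G = K_{2,3}$, for which $m = 6$, $b = 0$, and $\nu_{ur}(G) = 1 = \frac{m + b}{6}$, contradicting the choice of $G$; so we may assume that at least one of $x, y$ has a neighbor outside $\{u, v, w, x, y\}$.

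\textbf{Reduction.} Next, I would set $G' = G - \{u, v, w\}$, with $m'$ edges and $b'$ good bridges. Because the only $G$-neighbors of $\{u, v, w\}$ lie in $\{v, x, y\}$ (for $u, w$) and in $\{u, w\}$ (for $v$), precisely the six edges $uv, vw, ux, uy, wx, wy$ of the $K_{2,3}$ are removed, so $m' = m - 6$. None of these six edges is a bridge of $G$, since each lies on a $4$-cycle in the $K_{2,3}$. Mimicking the style of the previous claims, I would verify that every good bridge $e$ of $G$ survives as a good bridge of $G'$: if a witness path $P$ from a degree-$\leq 2$ vertex $p$ to another degree-$\leq 2$ vertex $q$ containing $e$ avoids $\{u, v, w\}$ we are done, and otherwise the maximal subpath of $P$ through $\{u, v, w\}$ is entered only via $x$ or $y$, so (since $e \in G'$ is not incident to $\{u, v, w\}$) we may truncate $P$ at this entry point; as both $x$ and $y$ have degree at most $1$ in $G'$, the truncated path witnesses $e$ as a good bridge of $G'$. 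Hence $b' \ge b$.

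\textbf{Extension and contradiction.} No component of $G'$ is $K_{3,3}$: such a component would be disjoint from $\{u, v, w, x, y\}$ (since $x, y$ have degree at most $1$ in $G'$), and would thus form a component of $G$ itself, contradicting $G$ being connected and distinct from $K_{3,3}$. Applying the minimality of $G$ to each component of $G'$ supplies a uniquely restricted matching $M'$ of $G'$ with $|M'| \ge \frac{m' + b'}{6}$. The set $M = M' \cup \{uv\}$ is a matching of $G$, and it is uniquely restricted: an $M$-alternating cycle entirely in $G'$ would contradict the choice of $M'$, while an $M$-alternating cycle using $uv$ would have to continue $u - v - w$ (as $w$ is the only other neighbor of $v$) and then demand an $M$-edge at $w$, of which there is none, since $w \notin V(G')$ and $uv$ is not incident to $w$. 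Therefore $\nu_{ur}(G) \ge |M'| + 1 \ge \frac{m + b'}{6} \ge \frac{m + b}{6}$, the desired contradiction. I expect the main obstacle to be the bookkeeping behind $b' \ge b$, which rests essentially on promoting $x$ and $y$ to fresh degree-$\leq 2$ endpoints for witness paths in $G'$.
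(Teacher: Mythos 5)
Your proof is correct and follows essentially the same route as the paper's: delete $\{u,v,w\}$, observe $m'\geq m-6$ and $b'\geq b$, and add $uv$ to a uniquely restricted matching of $G'$; the paper simply asserts these three facts, whereas you supply the (correct) justifications for the edge count, the survival of good bridges via truncated witness paths at $x$ and $y$, and the absence of $M$-alternating cycles. The only slip is the claim that $V(G)=\{u,v,w,x,y\}$ forces $G=K_{2,3}$ (the edge $xy$ could also be present), but that case is handled by your general reduction anyway, so nothing is lost.
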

\begin{proof}[Proof of Claim \ref{claim5}:]
Suppose, for a contradiction, that 
$u$ and $w$ have three common neighbors.
Let $G'=G-\{ u,v,w\}$
have $m'$ edges and $b'$ good bridges.
Since 
$m'\geq m-6$, 
$b'\geq b$, and
adding $uv$ to a uniquely restricted matching in $G'$
yields a uniquely restricted matching in $G$,
the choice of $G$ implies the contradiction
$\nu_{ur}(G)\geq \nu_{ur}(G')+1\geq \frac{m'+b'}{6}+1\geq \frac{m+b}{6}$.
\end{proof}

\begin{claim}\label{claim6}
$v$ is the only common neighbor of $u$ and $w$.
\end{claim}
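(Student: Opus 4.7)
The plan is to suppose, for contradiction, that $u$ and $w$ have a second common neighbor; by Claim~\ref{claim5} this vertex is unique, call it $v'$. By Claim~\ref{claim4}, the set $\{u, v, w, v'\}$ induces a $4$-cycle $C$, and by Claim~\ref{claim3} applied to $v$, both $u$ and $w$ have degree $3$. Let $u''$ and $w''$ denote the third neighbors of $u$ and $w$ respectively; they are distinct, since otherwise $u'' = w''$ would be a third common neighbor of $u$ and $w$, contradicting Claim~\ref{claim5}.

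I would form $G' = G - \{u, v, v', w, w''\}$, take a uniquely restricted matching $M'$ in $G'$ satisfying $u'' \notin V(M')$, and set $M = M' \cup \{uu'', ww''\}$. To verify that $M$ is uniquely restricted in $G$, I would argue by successive exclusion of vertices from any $M$-alternating cycle: $v$ and $v'$ are excluded since they are uncovered by $M$ and so have all incident edges non-matching; $u$ is then excluded because its only matching edge is $uu''$ while its two non-matching edges $uv$ and $uv'$ enter the forbidden vertices $v$ and $v'$, and symmetrically $w$ is excluded; finally $u''$ and $w''$ are excluded since their unique matching edges $uu''$ and $ww''$ revisit $u$ and $w$. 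Any $M$-alternating cycle would therefore lie in $G' - u''$, contradicting that $M'$ is uniquely restricted in $G'$.

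For the arithmetic, $G'$ loses the four edges of $C$, the edges $uu''$ and $ww''$, and at most $\deg_G(w'') - 1 \le 2$ further edges incident to $w''$; of these, only $uu''$, $ww''$, and the at most two further edges at $w''$ can have been good bridges of $G$. In the typical case this gives $m' \ge m - 8$ and $b' \ge b - 4$, yielding $|M| \ge (m + b - 12)/6 + 2 = (m+b)/6$ and the desired contradiction. In the exceptional sub-case where $\deg(v') = 3$ with third neighbor $v'' \ne w''$, the edge $v'v''$ is additionally removed, but I would compensate by including $v'v''$ as a third matching edge (provided $v'' \notin V(M')$), yielding $|M| \ge (m + b - 14)/6 + 3 = (m+b+4)/6$.

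The main obstacle is arranging that $M'$ leaves $u''$ (and $v''$ in the exceptional case) uncovered without appreciable loss of matching size. If every maximum uniquely restricted matching of $G'$ covers these vertices, I would instead work with $G' - u''$ (and $G' - u'' - v''$), compensating the at most $2 + 2$ lost edges and good bridges by further matching edges incident to the freed vertices, such as $u''x$ for a suitable neighbor $x$ of $u''$; verifying that these larger matchings preserve the uniquely restricted property again follows by the vertex-exclusion argument.
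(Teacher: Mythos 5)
There is a genuine gap, and it sits exactly where you locate it yourself: the requirement that the uniquely restricted matching $M'$ of $G'$ avoid $u''$. The minimality of the counterexample only guarantees \emph{some} uniquely restricted matching of size at least $\frac{m'+b'}{6}$ in each smaller graph; it gives you no control over which vertices that matching covers, and there is no reason a suitable $M'$ avoiding $u''$ of the required size exists. Your fallback of deleting $u''$ as well does not close the gap: removing the six vertices $\{u,v,v',w,u'',w''\}$ costs up to $11$ edges and $7$ good bridges against a gain of only two matching edges, i.e.\ $\frac{m+b-18}{6}+2=\frac{m+b-6}{6}<\frac{m+b}{6}$, and the proposed compensation by ``further matching edges incident to the freed vertices'' reintroduces the same covered/uncovered problem one layer further out. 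The deeper issue is the choice of matching edges: by routing the two new edges $uu''$ and $ww''$ \emph{out} of the $4$-cycle you are forced to manage their far endpoints.

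The paper avoids all of this by putting the $4$-cycle edge $vw$ itself into the matching. Since $v$ has degree $2$, an $M$-alternating cycle through $vw$ would have to leave $v$ via $uv$ and then use a matching edge at $u$; so it suffices to arrange that $u$ is either uncovered or matched along a bridge (which lies on no cycle). Concretely, if neither $u$ nor $w$ is incident with a good bridge, one deletes only $\{u,v,w\}$, loses at most $6$ edges and \emph{no} good bridges, and adds the single edge $vw$, giving $\frac{m-6+b}{6}+1=\frac{m+b}{6}$; if $u$ is incident with a good bridge $uu'$ (necessarily its third edge, as the cycle edges are not bridges), one deletes $\{u,v,w,u'\}$, loses at most $8$ edges and $4$ good bridges, and adds both $uu'$ and $vw$, which is safe precisely because $uu'$ is a bridge. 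Your vertex-exclusion argument for verifying the uniquely restricted property is sound in itself, but the construction it is meant to certify cannot be carried out as stated.
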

\begin{proof}[Proof of Claim \ref{claim6}:]
Suppose, for a contradiction, that 
$u$ and $w$ have two common neighbors.

First, we assume that $u$ is incident with a good bridge $uu'$.
Let $G'=G-\{ u,v,w,u'\}$
have $m'$ edges and $b'$ good bridges, see the left of Figure \ref{fig5}.
Since 
$m'\geq m-8$, 
$b'\geq b-4$, and
adding $uu'$ as well as $vw$ 
to a uniquely restricted matching in $G'$
yields a uniquely restricted matching in $G$,
the choice of $G$ implies the contradiction
$\nu_{ur}(G)\geq \nu_{ur}(G')+2\geq \frac{m'+b'}{6}+2\geq \frac{m+b}{6}$.

Hence, we may assume 
that neither $u$ nor $w$ is incident with a good bridge.
Let $G'=G-\{ u,v,w\}$
have $m'$ edges and $b'$ good bridges, see the right of Figure \ref{fig5}.
Since 
$m'\geq m-6$, 
$b'\geq b$, and
adding $vw$ 
to a uniquely restricted matching in $G'$
yields a uniquely restricted matching in $G$,
the choice of $G$ implies the contradiction
$\nu_{ur}(G)\geq \nu_{ur}(G')+1\geq \frac{m'+b'}{6}+1\geq \frac{m+b}{6}$.
\end{proof}
\begin{figure}[H]
\begin{minipage}{0.48\textwidth}
\centering\tiny
\begin{tikzpicture}[scale = 0.9] 
	    \node[label=left:\normalsize $v$] (v) at (0,0.5) {};
	    \node[label=above:\normalsize $u$] (u) at (1,1) {};
	    \node[label=below:\normalsize $w$] (w) at (1,0) {};
	    \node (a) at (2.5,0.5) {};
	    \node[label=above:\normalsize $u^\prime$] (u') at (2,1.5) {};
	    \foreach \from/\to in {u/v,u/a,w/a}
	    \draw [-] (\from) -- (\to);
	    
	    \draw [-,very thick] (v) -- (w);
	    \draw [-,very thick] (u) -- (u');
	    \draw [-,dashed] (u') -- (3,2);
	    \draw [-,dashed] (u') -- (3,1);
	    \draw [-,dashed] (w) -- (2,0);
	    \draw (1,-0.5) to[out=0,in=0] (2,2.25);
	    \pgftext[x=2.5cm,y=0cm] {\large $G^\prime$};
	    \pgftext[x=1.5cm, y=1.5cm] {\normalsize $b$};
\end{tikzpicture}
\end{minipage}
\begin{minipage}{0.48\textwidth}
\centering\tiny
\begin{tikzpicture}[scale=0.9]
    \node[label=left:\normalsize $v$] (v) at (0,0.5) {};
	    \node[label=above:\normalsize $u$] (u) at (1,1) {};
	    \node[label=below:\normalsize $w$] (w) at (1,0) {};
	    \node (a) at (2,0.5) {};
	
	    \foreach \from/\to in {u/v,u/a,w/a}
	    \draw [-] (\from) -- (\to);
	    
	    \draw [-,very thick] (v) -- (w);
	    \draw [-,dashed] (u) -- (2,1);
	    \draw [-,dashed] (w) -- (2,0);
	    \draw (1,-0.5) to[out=0,in=0] (1,1.5);
	    \pgftext[x=2.5cm,y=1cm] {\large $G^\prime$};

\end{tikzpicture}
\end{minipage}
\caption{An illustration for Claim \ref{claim6}.} \label{fig5}
\end{figure}
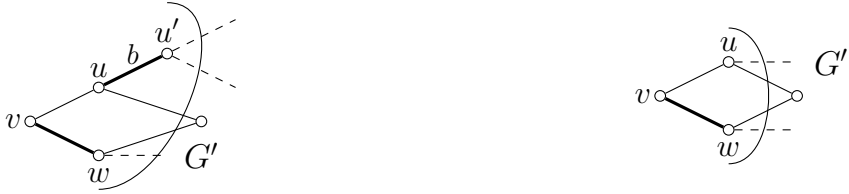
\begin{claim}\label{claim7}
At most one of the two edges incident with $v$ is a good bridge.
\end{claim}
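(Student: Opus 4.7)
The plan is to suppose both $uv$ and $vw$ are good bridges and derive a contradiction. Since $v$ has degree $2$ and both its incident edges are bridges, $G-v$ splits into exactly two connected components $G_u$ and $G_w$ with $u\in V(G_u)$ and $w\in V(G_w)$; moreover, Claim \ref{claim3} forces $\deg_G(u)=\deg_G(w)=3$. Writing $m_u,b_u$ and $m_w,b_w$ for the edge counts and good-bridge counts of the two components, I would first verify the accounting relations $m=m_u+m_w+2$ and $b\le b_u+b_w+2$. The nontrivial part is the latter: for any good bridge $e\in E(G_u)$ of $G$ with witnessing path $P$ from $x$ to $y$ in $G$, either $P$ already lies in $G_u$, or $P$ crosses $v$ via the edge $uv$, in which case truncating $P$ at $u$ produces a path in $G_u$ from $x$ (or $y$) to $u$ that contains $e$; since $u$ has degree $2$ in $G_u$, in either case $e$ is a good bridge of $G_u$. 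A variant of the same observation produces vertices $x_1\in V(G_u)\setminus\{u\}$ and $y_2\in V(G_w)\setminus\{w\}$ of degree at most $2$ in $G$.

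The key step is to enrich $G_u$ and $G_w$ with pendants: let $H_u$ be $G_u$ together with a new pendant vertex $v_u$ adjacent to $u$, and define $H_w$ analogously. Each is connected, subcubic, distinct from $K_{3,3}$, and strictly smaller than $G$, so by the choice of $G$ the theorem applies. The new edge $uv_u$ is a good bridge of $H_u$, witnessed by the path from $v_u$ to $x_1$, and every good bridge of $G_u$ remains good in $H_u$: any witnessing path ending at $u$ in $G_u$ extends across $uv_u$ to end at the degree-$1$ vertex $v_u$. Hence $b_{H_u}\ge b_u+1$ and $b_{H_w}\ge b_w+1$, and the induction hypothesis gives
\[
\nu_{ur}(H_u)\ge \frac{m_u+b_u+2}{6}\quad\text{and}\quad \nu_{ur}(H_w)\ge \frac{m_w+b_w+2}{6},
\]
alongside the base bounds $\nu_{ur}(G_u)\ge \frac{m_u+b_u}{6}$ and $\nu_{ur}(G_w)\ge \frac{m_w+b_w}{6}$.

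A maximum uniquely restricted matching in $H_u$ either avoids $uv_u$, yielding a uniquely restricted matching in $G_u$ of the same size, or uses $uv_u$, in which case removing $uv_u$ gives a uniquely restricted matching in $G_u$ of size $\nu_{ur}(H_u)-1$ that does not cover $u$. Comparing with $\nu_{ur}(G_u)$ yields the dichotomy that either $\nu_{ur}(G_u)\ge \frac{m_u+b_u+2}{6}$, or some maximum uniquely restricted matching in $G_u$ does not cover $u$; the analogous dichotomy holds for $G_w$ and $w$. For each of the four resulting cases I plan to build the required matching in $G$ as $M_u\cup M_w\cup E$, where $M_u\subseteq E(G_u)$ and $M_w\subseteq E(G_w)$ are suitably chosen uniquely restricted matchings (possibly leaving $u$ or $w$ uncovered) and $E$ is one of $\emptyset,\{uv\},\{vw\}$. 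The resulting matching is uniquely restricted in $G$ because any alternating cycle through $v$ would need an alternating path between $u$ and $w$ in $G-v$, which does not exist. A routine arithmetic check using $m_u+m_w=m-2$ and $b_u+b_w\ge b-2$ then delivers $|M|\ge \frac{m+b}{6}$ in each case, contradicting the choice of $G$.

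The main obstacle is the case where neither $G_u$ nor $G_w$ satisfies the stronger, pendant-induced bound: the naive estimate $\nu_{ur}(G_u)+\nu_{ur}(G_w)\ge \frac{m+b-4}{6}$ together with one appended edge at $v$ still falls short of $\frac{m+b}{6}$ unless some extension is free. The pendant construction resolves this precisely: if the stronger bound fails on either side, then the dichotomy guarantees that some maximum uniquely restricted matching of the corresponding component leaves $u$ (respectively $w$) uncovered, so that $\{uv\}$ or $\{vw\}$ can be adjoined without any loss, and the ordinary induction bounds on $G_u$ and $G_w$ together with this extra edge suffice to push the total strictly above $\frac{m+b}{6}$.
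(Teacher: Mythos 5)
Your proof is correct, but it takes a genuinely different route from the paper's. The paper argues by local surgery around $u$: if the two edges at $u$ other than $uv$ are not good bridges, it deletes $\{u,v\}$ (losing at most $4$ edges and $2$ good bridges) and adds $uv$ to an inductive matching; otherwise it picks a good bridge $uu'$ at $u$, passes to $G-\{u,v\}+\{u'w\}$, observes that the inserted edge $u'w$ inherits the good-bridge property from $uu'$ and $vw$ (so only $3$ edges and $3$ good bridges are lost), and recovers a uniquely restricted matching of $G$ via the exchange $u'w\mapsto\{uu',vw\}$. You instead decompose $G$ globally at the cut vertex $v$ into $G_u$ and $G_w$ and encode the needed flexibility --- ``there is a large uniquely restricted matching of $G_u$ missing $u$'' --- through the auxiliary pendant graphs $H_u,H_w$, applying the minimality of $G$ twice on each side. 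The delicate point is your dichotomy, and I checked that it holds: since the pendant edge lies on no cycle, $\nu_{ur}(G_u)\le\nu_{ur}(H_u)\le\nu_{ur}(G_u)+1$, so if $\nu_{ur}(G_u)<\frac{m_u+b_u+2}{6}\le\nu_{ur}(H_u)$ then every maximum uniquely restricted matching of $H_u$ uses $uv_u$, and deleting that edge yields a \emph{maximum} uniquely restricted matching of $G_u$ avoiding $u$, of size at least $\frac{m_u+b_u}{6}$ by the base bound on $G_u$; this is exactly what rescues your fourth case via $\frac{m_u+b_u}{6}+\frac{m_w+b_w}{6}+1\ge\frac{m+b+2}{6}$, and the remaining accounting ($m=m_u+m_w+2$, $b\le b_u+b_w+2$, truncation of witnessing paths at $u$, which has degree $2$ in $G_u$ by Claim \ref{claim3}) is sound. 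The paper's approach is shorter and needs only one application of the induction hypothesis per case, but it requires verifying that the artificially inserted edge $u'w$ is a good bridge of the modified graph and that no other good bridges are spoiled; your approach avoids any edge insertion and exploits the bridge structure more transparently, at the cost of the pendant construction and a four-way case analysis.
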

\begin{proof}[Proof of Claim \ref{claim7}:]
Suppose, for a contradiction, that 
$uv$ and $vw$ are both good bridges.
Let $N_G(u)=\{ v,u',u''\}$.

First, we assume that $uu'$ and $uu''$ are both not good bridges.
Let $G'=G-\{ u,v\}$
have $m'$ edges and $b'$ good bridges, see the left of Figure \ref{fig6}.
Since 
$m'\geq m-4$, 
$b'\geq b-2$, and
adding $uv$ 
to a uniquely restricted matching in $G'$
yields a uniquely restricted matching in $G$,
the choice of $G$ implies the contradiction
$\nu_{ur}(G)\geq \nu_{ur}(G')+1\geq \frac{m'+b'}{6}+1\geq \frac{m+b}{6}$.

Hence, we may assume that $uu'$ is a good bridge.
Let $G'=G-\{ u,v\}+\{ u'w\}$
have $m'$ edges and $b'$ good bridges, see the right of Figure \ref{fig6}.
Clearly, $m'\geq m-3$.
Since $uu'$ and $vw$ are good bridges of $G$, 
the newly inserted edge $u'w$ is a good bridge of $G'$.
Note that this also implies
that every good bridge of $G$ that belongs to $G'$
is a good bridge of $G'$.
Since $u$ is incident with at most $3$ good bridges,
we obtain $b'\geq b-3$.
Let $M'$ be a uniquely restricted matching in $G'$.
If $u'w\not\in M'$, then let $M=M'\cup \{ uv\}$;
otherwise, let $M=(M'\setminus \{ u'w\})\cup \{ uu',vw\}$.
Since $M$ is a uniquely restricted matching in $G$,
the choice of $G$ implies the contradiction
$\nu_{ur}(G)\geq \nu_{ur}(G')+1\geq \frac{m'+b'}{6}+1\geq \frac{m+b}{6}$.
\end{proof}

\begin{figure}[H]
\begin{minipage}{0.48\textwidth}
\centering\tiny
\begin{tikzpicture}[scale = 0.9] 
	    \node[label=left:\normalsize $v$] (v) at (0,0.5) {};
	    \node[label=above:\normalsize $u$] (u) at (1,1) {};
	    \node[label=below:\normalsize $w$] (w) at (1,0) {};
	    \node[label=right:\normalsize $u^\prime$] (u') at (2,0.5) {};
	    \node[label=right:\normalsize $u^{\prime\prime}$] (u'') at (2,1.5) {};
	    \foreach \from/\to in {v/w,u/u',u/u''}
	    \draw [-] (\from) -- (\to);
	    
	    \draw [-,very thick] (u) -- (v);
	    \draw (-0.5,0.25) to[out=-30,in=-30] (1.2,1.5);
	    \pgftext[x=3cm,y=1.5cm] {\large $G^\prime$};
	    \pgftext[x=0.5cm, y=1cm] {\normalsize $b$};
	    \pgftext[x=0.5cm, y=0cm] {\normalsize $b$};
	    \pgftext[x=1.5cm, y=1.5cm] {\normalsize $\bar{b}$};
	    \pgftext[x=1.6cm, y=0.93cm] {\normalsize $\bar{b}$};
\end{tikzpicture}
\end{minipage}
\begin{minipage}{0.48\textwidth}
\centering\tiny
\begin{tikzpicture}[scale=0.9]
	    \node[label=left:\normalsize $v$] (v) at (0,0.5) {};
	    \node[label=above:\normalsize $u$] (u) at (1,1) {};
	    \node[label=below:\normalsize $w$] (w) at (1,0) {};
	    \node[label=right:\normalsize $u^\prime$] (u') at (2,0.5) {};
	    \node[label=right:\normalsize $u^{\prime\prime}$] (u'') at (2,1.5) {};
	    \foreach \from/\to in {v/w,u/u',u/u'',u/v}
	    \draw [-] (\from) -- (\to);
	    
	    \draw[-,dotted] (w) -- (u');
	    \draw (-0.5,0.25) to[out=-30,in=-30] (1.2,1.5);
	    \pgftext[x=3cm,y=1.5cm] {\large $G^\prime$};
	    \pgftext[x=0.5cm, y=1cm] {\normalsize $b$};
	    \pgftext[x=0.5cm, y=0cm] {\normalsize $b$};
	    \pgftext[x=1.6cm, y=0.9cm] {\normalsize $b$};

\end{tikzpicture}
\end{minipage}
\caption{An illustration for Claim \ref{claim7}.} \label{fig6}
\end{figure}
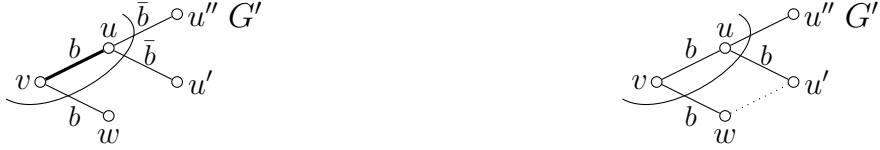

\begin{claim}\label{claim8}
No edge incident with $v$ is a good bridge.
\end{claim}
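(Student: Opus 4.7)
The plan is to mirror the case analysis used for Claim 7. Suppose for contradiction that some edge at $v$ is a good bridge; by Claim 7 and the symmetry between $u$ and $w$, I may assume that $uv$ is a good bridge while $vw$ is not. Write $N_G(u)=\{v,u',u''\}$. Claim 6 guarantees that $u'$ and $u''$ are distinct from $w$ and that neither is adjacent to $w$ in $G$, which is exactly what is needed to perform the edge-insertion below legally.

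In the first case I handle, neither $uu'$ nor $uu''$ is a good bridge. I will take $G'=G-\{u,v\}$, which satisfies $m'=m-4$ and $b'\geq b-1$ (the only removed good bridge is $uv$, since $vw$ is not good by Claim 7). Given a uniquely restricted matching $M'$ of $G'$, I lift it to $M=M'\cup\{uv\}$ in $G$ and check $M$ is uniquely restricted via the following trichotomy on an alleged $M$-alternating cycle $C$: $C$ cannot contain the bridge $uv$; $C$ cannot meet $v$ (else $C$ would be forced to use $uv$); and $C$ cannot meet $u$ (else $C$ would contain both $uu'$ and $uu''$, both outside $M$, failing alternation at $u$). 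Hence $C$ lies in $G-\{u,v\}$ and is $M'$-alternating in $G'$, contradicting the uniqueness of $M'$. The resulting bound $\nu_{ur}(G)\geq \frac{m+b-5}{6}+1>\frac{m+b}{6}$ contradicts the minimality of $G$.

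In the remaining case, by symmetry between $u'$ and $u''$ I assume $uu'$ is a good bridge, and set $G'=G-\{u,v\}+\{u'w\}$, which is a valid construction by Claim 6. Then $m'=m-3$, and since $v$ contributes only the good bridge $uv$ (because $vw$ is not good) and $u$ is incident with at most three good bridges, I obtain $b'\geq b-3$. Given a uniquely restricted matching $M'$ of $G'$, I lift it by setting $M=M'\cup\{uv\}$ if $u'w\notin M'$ and $M=(M'\setminus\{u'w\})\cup\{uu',vw\}$ if $u'w\in M'$, exactly as in the corresponding case of Claim 7. The uniqueness verification is analogous: any $M$-alternating cycle must avoid both bridges $uv$ and $uu'$, and the alternation conditions at $u$ and $v$ then force $u,v$ out of the cycle, so the cycle lies in $G-\{u,v\}\subseteq G'$ (without using the inserted edge $u'w$) and is $M'$-alternating in $G'$, contradicting $M'$. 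The estimate $\nu_{ur}(G)\geq \frac{m+b-6}{6}+1=\frac{m+b}{6}$ then yields the final contradiction.

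The main obstacle will be the uniqueness verification in the second case, which leans on the bridge status of $uv$ and $uu'$ together with the constrained $M$-edge set at $u$ to exclude both $u$ and $v$ from any alleged $M$-alternating cycle; the remaining bookkeeping on edges and good bridges is routine and parallels the previous claims.
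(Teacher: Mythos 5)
Your proof is correct and follows essentially the same route as the paper: after reducing to the situation where $uv$ is a good bridge and $vw$ is not, you use the same two constructions ($G-\{u,v\}$, and $G-\{u,v\}+\{u'w\}$ with the same matching lift distinguishing whether $u'w\in M'$) and the same edge and good-bridge counts, adding only a more explicit verification that the lifted matchings are uniquely restricted. The sole difference is a cosmetic reorganization of the case split: you branch on whether some $uu'$ \emph{is} a good bridge rather than on whether some edge at $u$ fails to be one, so the subcase with exactly one good bridge among $uu',uu''$ is routed through the edge-insertion construction instead of plain deletion, but both treatments deliver the required bound $\frac{m+b}{6}$.
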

\begin{proof}[Proof of Claim \ref{claim8}:]
Suppose, for a contradiction, that 
$uv$ is a good bridge 
but $vw$ is not.

First, we assume that $u$ is incident with an edge 
that is not a good bridge.
Let $G'=G-\{ u,v\}$
have $m'$ edges and $b'$ good bridges, see the left of Figure \ref{fig7}.
Since 
$m'\geq m-4$, 
$b'\geq b-2$, and
adding $uv$ 
to a uniquely restricted matching in $G'$
yields a uniquely restricted matching in $G$,
the choice of $G$ implies the contradiction
$\nu_{ur}(G)\geq \nu_{ur}(G')+1\geq \frac{m'+b'}{6}+1\geq \frac{m+b}{6}$.

Hence, we may assume that 
all three edges incident with $u$ are good bridges.
For a neighbor $u'$ of $u$ distinct from $v$,
let the graph $G'=G-\{ u,v\}+\{ u'w\}$
have $m'$ edges and $b'$ good bridges, see the right of Figure \ref{fig7}.
Note that $u'w$ is not a good bridge of $G'$,
because, otherwise, $vw$ would be a good bridge of $G$.
Nevertheless, we obtain $m'\geq m-3$ and $b'\geq b-3$.
Let $M'$ be a uniquely restricted matching in $G'$.
If $u'w\not\in M'$, then let $M=M'\cup \{ uv\}$;
otherwise, let $M=(M'\setminus \{ u'w\})\cup \{ uu',vw\}$.
Since $M$ is a uniquely restricted matching in $G$,
the choice of $G$ implies the contradiction
$\nu_{ur}(G)\geq \nu_{ur}(G')+1\geq \frac{m'+b'}{6}+1\geq \frac{m+b}{6}$.
\end{proof}
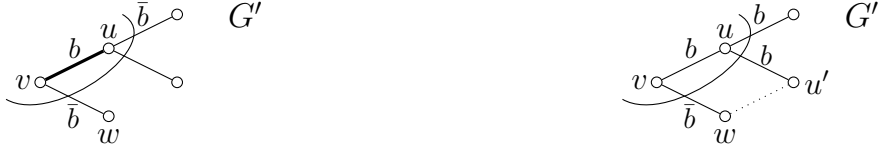
\begin{figure}[H]
\begin{minipage}{0.48\textwidth}
\centering\tiny
\begin{tikzpicture}[scale = 0.9] 
	    \node[label=left:\normalsize $v$] (v) at (0,0.5) {};
	    \node[label=above:\normalsize $u$] (u) at (1,1) {};
	    \node[label=below:\normalsize $w$] (w) at (1,0) {};
	    \node (u') at (2,0.5) {};
	    \node (u'') at (2,1.5) {};
	    \foreach \from/\to in {v/w,u/u',u/u''}
	    \draw [-] (\from) -- (\to);
	    
	    \draw [-,very thick] (u) -- (v);
	    \draw (-0.5,0.25) to[out=-30,in=-30] (1.2,1.5);
	    \pgftext[x=3cm,y=1.5cm] {\large $G^\prime$};
	    \pgftext[x=0.5cm, y=1cm] {\normalsize $b$};
	    \pgftext[x=0.48cm, y=0cm] {\normalsize $\bar{b}$};
	    \pgftext[x=1.5cm, y=1.5cm] {\normalsize $\bar{b}$};
\end{tikzpicture}
\end{minipage}
\begin{minipage}{0.48\textwidth}
\centering\tiny
\begin{tikzpicture}[scale=0.9]
	    \node[label=left:\normalsize $v$] (v) at (0,0.5) {};
	    \node[label=above:\normalsize $u$] (u) at (1,1) {};
	    \node[label=below:\normalsize $w$] (w) at (1,0) {};
	    \node[label=right:\normalsize $u^\prime$] (u') at (2,0.5) {};
	    \node (u'') at (2,1.5) {};
	    \foreach \from/\to in {v/w,u/u',u/u'',u/v}
	    \draw [-] (\from) -- (\to);
	    
	    \draw[-,dotted] (w) -- (u');

	    \draw (-0.5,0.25) to[out=-30,in=-30] (1.2,1.5);
	    \pgftext[x=3cm,y=1.5cm] {\large $G^\prime$};
	    \pgftext[x=0.5cm, y=1cm] {\normalsize $b$};
	    \pgftext[x=0.48cm, y=0cm] {\normalsize $\bar{b}$};
	   \pgftext[x=1.5cm, y=1.5cm] {\normalsize $b$};
	    \pgftext[x=1.6cm, y=0.9cm] {\normalsize $b$};

\end{tikzpicture}
\end{minipage}
\caption{An illustration for Claim \ref{claim8}.} \label{fig7}
\end{figure}

Now, we are in a position to derive the final contradiction.

First, we assume that $u$ and $w$ are both not incident with any good bridge.
Let $G'=G-\{ u,v,w\}$
have $m'$ edges and $b'$ good bridges, see the left of Figure \ref{fig8}.
Since 
$m'\geq m-6$, 
$b'\geq b$, and
adding $uv$ 
to a uniquely restricted matching in $G'$
yields a uniquely restricted matching in $G$,
the choice of $G$ implies the contradiction
$\nu_{ur}(G)\geq \nu_{ur}(G')+1\geq \frac{m'+b'}{6}+1\geq \frac{m+b}{6}$.

Next, we assume that $u$ is incident with two good bridges.
Let $G'=G-\{ u,v\}$
have $m'$ edges and $b'$ good bridges, see the middle of Figure \ref{fig8}.
Since 
$m'\geq m-4$, 
$b'\geq b-2$, and
adding $uv$ 
to a uniquely restricted matching in $G'$
yields a uniquely restricted matching in $G$,
the choice of $G$ implies the contradiction
$\nu_{ur}(G)\geq \nu_{ur}(G')+1\geq \frac{m'+b'}{6}+1\geq \frac{m+b}{6}$.

Hence, by symmetry between $u$ and $w$, 
we may assume that $u$ is incident 
with exactly one good bridge $uu'$,
and that $w$ is incident with at most one good bridge.
Let $G'=G-\{ u,v,w,u'\}$
have $m'$ edges and $b'$ good bridges, see the right of Figure \ref{fig8}.
Since 
$m'\geq m-8$, 
$b'\geq b-4$, and
adding $uu'$ as well as $vw$ 
to a uniquely restricted matching in $G'$
yields a uniquely restricted matching in $G$,
the choice of $G$ implies the contradiction
$\nu_{ur}(G)\geq \nu_{ur}(G')+2\geq \frac{m'+b'}{6}+2\geq \frac{m+b}{6}$,
which completes the proof.

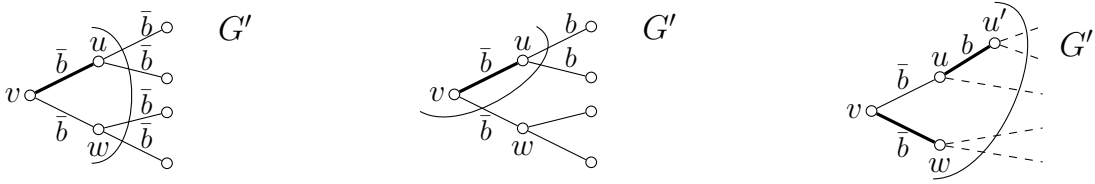
\begin{figure}[H]
\begin{minipage}{0.33\textwidth}
\centering\tiny
\begin{tikzpicture}[scale = 0.9] 
	    \node[label=left:\normalsize $v$] (v) at (0,0.5) {};
	    \node[label=above:\normalsize $u$] (u) at (1,1) {};
	    \node[label=below:\normalsize $w$] (w) at (1,0) {};
	    \node (u') at (2,0.75) {};
	    \node (u'') at (2,1.5) {};
	    \node (w') at (2,0.25) {};
	    \node (w'') at (2,-0.5) {};
	    \foreach \from/\to in {v/w,u/u',u/u'', w/w', w/w''}
	    \draw [-] (\from) -- (\to);
	    
	    \draw [-,very thick] (u) -- (v);
	    \draw (0.9,-0.5) to[out=0,in=0] (0.9,1.5);
	    \pgftext[x=3cm,y=1.5cm] {\large $G^\prime$};
	    \pgftext[x=0.46cm, y=1cm] {\normalsize $\bar{b}$};
	    \pgftext[x=0.46cm, y=0cm] {\normalsize $\bar{b}$};
	    \pgftext[x=1.7cm, y=1.55cm] {\normalsize $\bar{b}$};
	     \pgftext[x=1.7cm, y=1.05cm] {\normalsize $\bar{b}$};
	      \pgftext[x=1.7cm, y=0.4cm] {\normalsize $\bar{b}$};
	       \pgftext[x=1.7cm, y=-0.1cm] {\normalsize $\bar{b}$};
\end{tikzpicture}
\end{minipage}
\begin{minipage}{0.32\textwidth}
\centering\tiny
\begin{tikzpicture}[scale=0.9]
	    \node[label=left:\normalsize $v$] (v) at (0,0.5) {};
	    \node[label=above:\normalsize $u$] (u) at (1,1) {};
	    \node[label=below:\normalsize $w$] (w) at (1,0) {};
	    \node (u') at (2,0.75) {};
	    \node (u'') at (2,1.5) {};
	    \node (w') at (2,0.25) {};
	    \node (w'') at (2,-0.5) {};
	    \foreach \from/\to in {v/w,u/u',u/u'', w/w', w/w''}
	    \draw [-] (\from) -- (\to);
	    
	    \draw [-,very thick] (u) -- (v);
	    \draw (-0.5,0.25) to[out=-30,in=-30] (1.2,1.5);
	    \pgftext[x=3cm,y=1.5cm] {\large $G^\prime$};
	    \pgftext[x=0.46cm, y=1cm] {\normalsize $\bar{b}$};
	    \pgftext[x=0.46cm, y=0cm] {\normalsize $\bar{b}$};
	    \pgftext[x=1.7cm, y=1.55cm] {\normalsize $b$};
	     \pgftext[x=1.7cm, y=1.05cm] {\normalsize $b$};

\end{tikzpicture}
\end{minipage}
\begin{minipage}{0.33\textwidth}
\centering\tiny
\begin{tikzpicture}[scale = 0.9] 
	    \node[label=left:\normalsize $v$] (v) at (0,0.5) {};
	    \node[label=above:\normalsize $u$] (u) at (1,1) {};
	    \node[label=below:\normalsize $w$] (w) at (1,0) {};
	    \node[label=above:\normalsize $u^\prime$] (u'') at (1.8,1.5) {};
	    \foreach \from/\to in {v/u}
	    \draw [-] (\from) -- (\to);
	    
	    \draw [-,very thick] (v) -- (w);
	    \draw [-,very thick] (u) -- (u'');
	    
	    \draw [-,dashed] (u) -- (2.5,0.75);
	    \draw [-,dashed] (w) -- (2.5,0.25);
	    \draw [-,dashed] (w) -- (2.5,-0.25);
	     \draw [-,dashed] (u'') -- (2.5,1.75);
	      \draw [-,dashed] (u'') -- (2.5,1.25);
	    \draw (0.9,-0.5) to[out=0,in=0] (1.8,2.1);
	    \pgftext[x=3cm,y=1.5cm] {\large $G^\prime$};
	    \pgftext[x=0.46cm, y=1cm] {\normalsize $\bar{b}$};
	    \pgftext[x=0.46cm, y=0cm] {\normalsize $\bar{b}$};
	    \pgftext[x=1.4cm, y=1.48cm] {\normalsize $b$};
\end{tikzpicture}
\end{minipage}
\caption{An illustration of the final contradiction.} \label{fig8}
\end{figure}
\end{proof}
In order to prove Theorem \ref{theorem2}, 
we need the following lemma.

\begin{lemma}\label{lemma1}
If $G$ is a connected subcubic graph of order $n$ and girth at least $7$
that is not a tree and not cubic, then $\nu_{ur}(G)\geq \frac{n}{3}$.
\end{lemma}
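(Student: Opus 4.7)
The plan is a minimum-counterexample induction on $n$, in the spirit of the proof of Theorem \ref{theorem1}. Suppose $G$ is a counterexample of smallest order. Since girth at least $7$ and not being a tree force $n \geq 7$, the small base cases can be checked directly; we may assume $n$ is larger. Because $G$ is not cubic, $G$ has a vertex $v$ of degree at most $2$, and the strategy is to remove a small vertex set around $v$, apply minimality (or the known subcubic-tree bound $\nu_{ur}(T) = \nu(T) \geq (|V(T)|-1)/3$, valid since trees have no alternating cycles) to the resulting graph $G'$, and extend a uniquely restricted matching of $G'$ by one edge incident with $v$, contradicting $\nu_{ur}(G) < n/3$.

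If $v$ is a leaf with neighbour $u$, I would take $G' = G - \{u, v\}$; this has $n-2$ vertices and, provided $u$ is not a cut vertex (the cut-vertex situation is handled by splitting $G$ at $u$ and combining uniquely restricted matchings of the branches with the edge $uv$), is connected, subcubic, still of girth at least $7$, not cubic (since the remaining neighbours of $u$ lose an edge), and not a tree (the edge count drops by at most two while the vertex count drops by two, and $G$ had at least $n$ edges). Minimality then gives $\nu_{ur}(G') \geq (n-2)/3$, and adjoining the pendant edge $uv$ yields a uniquely restricted matching of $G$ of size at least $(n-2)/3 + 1 > n/3$, as no alternating cycle can traverse the leaf $v$. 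Otherwise every vertex has degree $2$ or $3$, and I fix a degree-$2$ vertex $v$ with neighbours $u$ and $w$. The girth condition forces $u \not\sim w$, forbids a common neighbour of $u$ and $w$ other than $v$, and more generally makes the ball of radius $3$ around $v$ a tree. Setting $G' = G - \{v, u, w\}$, similar checks show that $G'$ inherits the hypotheses of the lemma (or is a tree or forest, handled by the subcubic-tree bound on each component), yielding $\nu_{ur}(G') \geq (n-3)/3$; the edge $uv$ extends this to a matching of size at least $n/3$ in $G$, and this extension is uniquely restricted because any alternating cycle through $uv$ would first visit $v$'s other neighbour $w$ and then require an edge of $M$ at $w$, which is impossible since no edge of the matching inherited from $G'$ is incident with the removed vertex $w$.

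The main obstacle is the bookkeeping in the exceptional subcases where the proposed $G'$ fails to inherit a hypothesis: $G'$ may be disconnected when $u$ or $w$ is a cut vertex; it may collapse into a tree when $G$ was essentially unicyclic; or it may become cubic when every vertex outside $\{v,u,w\}$ already had degree $3$. Each such configuration must be treated either by performing a different reduction (removing a slightly different small vertex set, for instance $G - v$ or $G - \{v,u\}$), by splitting at a cut vertex and combining matchings from each branch with the edge at $v$, or by directly invoking the subcubic-tree bound on an appropriate block of $G$. The girth-at-least-$7$ hypothesis is pivotal throughout, both for preventing short alternating cycles when we extend matchings and for tightly limiting the list of exceptional local configurations that must be enumerated.
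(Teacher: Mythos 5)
Your leaf case essentially matches the paper's and is fine (after deleting $u$ and $v$, at most one of the at most two components of $G'$ can be a tree because $G$ has at least $n$ edges, so you still reach $\frac{(n-2)-1}{3}+1=\frac{n}{3}$). The genuine gap is in the degree-$2$ case, and it is not mere bookkeeping. With minimum degree $2$, take $G'=G-\{u,v,w\}$: even in the ideal situation where $G'$ is connected, subcubic, non-cubic, non-tree and of girth at least $7$, minimality gives only $\nu_{ur}(G')\ge\frac{n-3}{3}$, and adding the one edge $uv$ yields exactly $\frac{n}{3}$ with zero slack. The moment $G'$ has even a single tree component $K$ --- which girth $7$ does not exclude, e.g.\ a path on four or more vertices whose two endpoints attach to $u$ and $w$ creates only long cycles --- you can only guarantee $\frac{n(K)-1}{3}$ on that component, so the total drops to $\frac{n-4}{3}+1=\frac{n-1}{3}<\frac{n}{3}$. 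Since you add only one matching edge, there is no way to absorb even one such loss; ``removing a slightly different small vertex set'' or ``splitting at a cut vertex'' does not recover the missing $\frac{1}{3}$, because every reduction of the form ``delete a bounded vertex set, add one edge'' has the same zero slack. These are not exceptional corner cases to be enumerated; they are the entire difficulty of the lemma.

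The paper's proof resolves this with a more global idea absent from your outline: it takes a \emph{maximal} path $P:u_1v_1u_2v_2\cdots u_kv_ku_{k+1}$ whose vertices $v_1,\dots,v_k$ all have degree $2$, deletes all $2k+1$ vertices of $P$, and adds the $k$ edges $u_1v_1,\dots,u_kv_k$. This creates a surplus of $k-\frac{2k+1}{3}=\frac{k-1}{3}$, enough to pay for up to $k-1$ tree components of $G-V(P)$. Counting the edges between $V(P)$ and the tree components (each such component needs at least two by the minimum degree, and only $k+3$ are available) gives $c\le\frac{k+3}{2}$, so the surviving case $c\ge k$ forces $k\le 3$, and those few residual configurations are eliminated using the maximality of $P$ together with the girth condition. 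Without some device of this kind that generates positive slack, the degree-$2$ case of your argument does not close.
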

\begin{proof}
Suppose, for a contradiction, that $G$ is a counterexample of minimum order. 

First, we assume that $G$ has a vertex $u$ of degree $1$.
Let $v$ be the unique neighbor of $u$, 
and let $G'=G-\{ u,v\}$.
Note that $G'$ has at most $2$ components,
none of which is cubic.
Since $G$ is not a tree, 
at most one component of $G'$ is a tree,
and such a component $K$ has a uniquely restricted matching of size at least
$\frac{n(K)-1}{3}$.
Therefore, 
since adding $uv$ to a uniquely restricted matching in $G^\prime$ 
yields a uniquely restricted matching in $G$,
we obtain the contradiction
$\nu_{ur}(G)\geq \nu_{ur}(G')+1\geq \frac{n-2-1}{3}+1=\frac{n}{3}$.
Hence, we may assume that $G$ has minimum degree $2$.

Let $P:u_1v_1u_2v_2\ldots u_kv_ku_{k+1}$
be a maximal path in $G$ 
such that the vertices $v_1,\ldots,v_k$ all have degree $2$ in $G$.
Let $G'=G-V(P)$,
let ${\cal T}$ be the set of components of $G'$ that are trees,
and let $c=|{\cal T}|$, see Figure \ref{fig9}.
If $T$ is in ${\cal T}$, 
then the minimum degree of $G$ implies
that there are at least two edges between $V(P)$ and $V(T)$.
Since there are at most $k+3$ edges between $V(P)$ and $V(G')$,
we obtain $c\leq \frac{k+3}{2}$.
If $c\leq k-1$, then, 
since adding $u_1v_1,\ldots,u_kv_k$ 
to a uniquely restricted matching in $G^\prime$ 
yields a uniquely restricted matching in $G$,
we obtain the contradiction
$\nu_{ur}(G)\geq \nu_{ur}(G')+k
\geq \frac{n-n(P)-c}{3}+k
\geq \frac{n-(2k+1)-(k-1)}{3}+k
=\frac{n}{3}$.
Hence, we may assume that $c\geq k$,
which, together with $c\leq \frac{k+3}{2}$, 
implies that $k\leq 3$.

Let $E$ be the set of edges of $G$ between $V(P)$
and a component in ${\cal T}$.
If neither $u_1$ nor $u_{k+1}$ are incident with an edge in $E$,
then $c\leq \frac{k-1}{2}$, contradicting $c\geq k$.
Hence, by symmetry, 
we may assume that $u_1w$ belongs to $E$.
Let $T$ be the component of $G'$ that contains $w$.

If $k\leq 2$, then, by the girth condition,
$u_1w$ is the only edge in $E$ incident with $w$.
By the maximality of $P$,
it follows that $w$ has degree $3$ in $G$.
This implies that $T$ has two endvertices $x$ and $y$.
Since $k\leq 2$, we may assume, by symmetry,
that $x$ is adjacent to $u_1$.
Again using the girth condition,
we obtain that $x$ is incident with exactly one edge in $E$.
This implies that $x$ has degree $2$ in $G$,
and, if $z$ is the neighbor of $x$ in $T$,
then the path $zxu_1v_1\ldots u_kv_ku_{k+1}$
contradicts the maximality of $P$.
Hence, we may assume that $k=3$.

Since $E$ contains at most $6$ edges,
$c=3$, and every component in ${\cal T}$
is incident with at least two edges in $E$,
all edges of $G$ that are incident with a vertex of $P$
and do not belong to $P$, belong to $E$,
and between $V(P)$ and every tree in ${\cal T}$
there are exactly two edges.

Let $u_2w'$ be in $E$,
and let $T'$ be the component of $G'$ that contains $w'$.
By the girth condition, 
$u_2w'$ is the only edge in $E$ incident with $w'$.
This implies that $T'$ has an endvertex $x'$ distinct from $w'$.
Since there are exactly two edges between $V(P)$ and $V(T')$,
the maximality of $P$ implies that $x'$ is adjacent to $u_3$.
If the two trees in ${\cal T}\setminus \{ T'\}$ are isolated vertices,
then $G$ contains a cycle of length $4$, which is a contradiction.
Hence, ${\cal T}\setminus \{ T'\}$ contains a tree $T''$ that has 
at least two endvertices $w''$ and $x''$.
By symmetry, we may assume that $x''$ is adjacent to $u_1$.
Since $x''$ is incident with only one edge in $E$,
it has degree $2$ in $G$,
and, if $z''$ is the neighbor of $x''$ in $T''$,
then the path $z''x''u_1v_1\ldots u_kv_ku_{k+1}$
contradicts the maximality of $P$.
\end{proof}


\begin{figure}[H]
\centering\tiny
\begin{tikzpicture}[scale = 0.9] 

	    \foreach \from/\name in {0/A,1/B,2/C,3/D,6/E,7/F,8/G}
	    \node (\name) at (\from,0) {};
	    
	    \draw[-,dotted] (4,0) -- (5,0);
	    \foreach \from/\to in {B/C, F/G}
	    \draw [-] (\from) -- (\to);
	    
	    \foreach \from/\to in {A/B,E/F, C/D}
	    \draw [-,very thick] (\from) -- (\to);
	    
	    \foreach \from/\x in {C/2,E/6}
	    \draw [-, dotted] (\from) -- (\x,1);
	   
	   \draw[-,dotted] (A) -- (-0.5,1);
	   \draw[-,dotted] (A) -- (0.5,1);
	   \draw[-,dotted] (G) -- (7.5,1);
	   \draw[-,dotted] (G) -- (8.5,1);
	   
	   \draw (-0.5,0) to[out=15,in=165] (8.5,0);
	   \draw (3.25,2) to[out=-30,in=-150] (4.25,2);
	   
	   \draw (-0.25,3) ellipse (1 and 0.5); 
	   \draw[-,dotted] (1.25,3) -- (2.25,3);
	   \draw (3.75,3) ellipse (1cm and 0.5cm);
	   \draw (7.75,3) ellipse (2cm and 1cm);
	   
	   \draw[-,dotted] (-0.25,2.5) -- (-0.5,1.5);
	   \draw[-,dotted] (-0.25,2.5) -- (0,1.5);
	   \draw[-,dotted] (3.75,2.5) -- (3.5,1.5);
	   \draw[-,dotted] (3.75,2.5) -- (4,1.5);
	   
	   \draw[thick,decorate,decoration={brace,amplitude=8pt}] (-1.25,3.5) -- (4.75,3.5);
	   \pgftext[x=1.75cm,y=4.2cm] {\large $\cal{T}$};
	   \pgftext[x=-1cm,y=0cm] {\large $P$};
	   \pgftext[x=4.75cm,y=2cm] {\normalsize $\geq 2$};
	   \pgftext[x=7.75cm,y=3cm] {\normalsize $G-(V(P)\cup V(\cal{T}))$};
	    
	    \foreach \from/\name in {0/u_1,1/v_1,2/u_2,3/v_2,6/u_k,7/v_k,8/u_{k+1}}
	    \pgftext[x=\from cm,y=-0.35cm] {\normalsize $\name$};
\end{tikzpicture}
\caption{An illustration of Lemma \ref{lemma1}.} \label{fig9}
\end{figure}
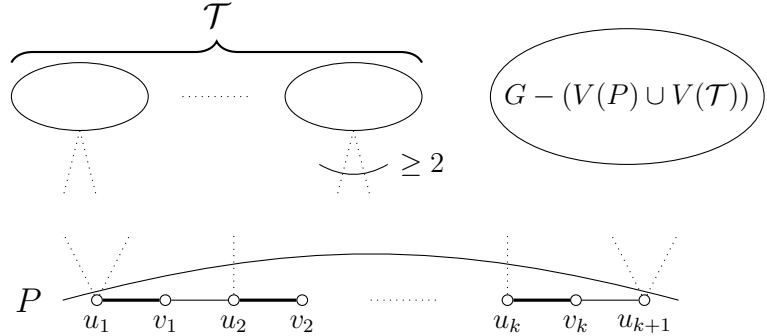

It is now straightforward to prove Theorem \ref{theorem2}.

\begin{proof}[Proof of Theorem \ref{theorem2}]
Suppose, for a contradiction, that $G$ is a counterexample of minimum order. 

First, we assume that $G$ has a vertex $u$ of degree $1$.
Let $v$ be the unique neighbor of $u$, 
and let $G'=G-\{ u,v\}$.
Since $G$ has order $n-2$ and at most $2$ components,
and adding $uv$ to a uniquely restricted matching in $G^\prime$ 
yields a uniquely restricted matching in $G$,
we obtain the contradiction
$\nu_{ur}(G)\geq \nu_{ur}(G')+1\geq \frac{n-2-2}{3}+1=\frac{n-1}{3}$.
Hence, we may assume that $G$ has minimum degree $2$.
By Lemma \ref{lemma1},
we may assume that $G$ is cubic.
Let $u$ be an endvertex of some spanning tree of $G$,
and let $G'=G-u$.
Clearly, $G'$ is connected, subcubic and not cubic, and it is not a tree.
Since every uniquely restricted matching in $G'$
is a uniquely restricted matching in $G$,
Lemma \ref{lemma1} implies
$\nu_{ur}(G)\geq \nu_{ur}(G')\geq \frac{n-1}{3}$,
which completes the proof. 
\end{proof}

\end{document}